\numberwithin{equation}{section}
\newtheorem{theorem}{Theorem}[section]
\newtheorem{corollary}[theorem]{Corollary}
\newtheorem{lemma}[theorem]{Lemma}
\theoremstyle{definition}
\newtheorem{remark}[theorem]{Remark}
\numberwithin{equation}{section}
\newcommand\Spec{\operatorname{Spec}}
\newcommand{\im}{\operatorname{Im}}
\newcommand{\charac}{\operatorname{char}}
\def\NN{{\mathbb N}}
\def\ZZ{{\mathbb Z}}
\begin{document}
\title[exceptional units in quaternion rings]{On the sumsets of exceptional units in quaternion rings}

\author{Hassan Cheraghpour$^{*}$ and David Dol\v{z}an}

\address{Hassan Cheraghpour:
University of Primorska, FAMNIT, Glagolja{\v s}ka 8, 6000 Koper, Slovenia.}
\email{cheraghpour.hassan@yahoo.com}

\address{David Dol{\v z}an:
$ ^{1} $Department of Mathematics, Faculty of Mathematics and Physics, University of Ljubljana, Jadranska 21, 1000 Ljubljana, Slovenia \and $ ^{2} $IMFM, Jadranska 19, 1000 Ljubljana, Slovenia.}
\email{david.dolzan@fmf.uni-lj.si}

\thanks{2020 Mathematics Subject Classification: 16U60, 13H99, 11T24, 11B13.}
\keywords{Exceptional unit, Finite ring, Quaternion ring.} 
\thanks{The first author is supported in part by the Slovenian Research Agency (research project N1-0210).}
\thanks{The second author is supported in part by the Slovenian Research Agency (research core funding No. P1-0222).}
\thanks{$^{*}$Corresponding author: Hassan Cheraghpour}
\maketitle

\begin{abstract}
\noindent
We investigate sums of exceptional units in a quaternion ring $H(R)$ over a finite commutative ring $R$. We prove that in order to find the number of representations of an element in $H(R)$ as a sum of $k$ exceptional units for some integer $k \geq 2$, we can limit ourselves to studying the quaternion rings over local rings. For a local ring $R$ of even order, we find the number of representations of an element of $H(R)$ as a sum of $k$ exceptional units for any integer $k \geq 2$. For a local ring $R$ of odd order, we find either the number or the bounds for the number of representations of an element of $H(R)$ as a sum of $2$ exceptional units.

\end{abstract}

\maketitle

\section{\textbf{Introduction}} 
\noindent
Much research has been devoted to studying sums of units in a ring in recent times. For example, it has been proved in \cite{lanski} that every element of an Artinian ring can be expressed as the sum of two units if and only if the factor ring over the Jacobson radical does not contain a summand isomorphic to the field with two elements.  

Furthermore, in 1969 Nagell \cite{nagell} introduced the concept of an exceptional
unit - the unit $u \in R$ is called \emph{exceptional} if the element $1 - u \in R$ is also a unit in $R$.
It turns out that exceptional units are important for studying Diophantine equations, since the solutions of many Diophantine equations can be reduced to the solution of the equation $ax + by = 1$, where $x$ and $y$ are units, so one has to find the exceptional units in the case when $a = b = 1$. The importance of the exceptional units also surfaces when studying certain cubic Diophantine equations \cite{nagell}, Thue equations \cite{tzanakis1}, Thue-Mahler equations \cite{tzanakis2} and discriminant form equations \cite{smart}.
Furthermore, exceptional units also have connections with cyclic resultants \cite{stewart1, stewart2}, Euclidean number fields \cite{houriet, lenstra, leutbecher} and Lehmer’s conjecture related to Mahler’s measure \cite{silverman1, silverman2}.

The sums of exceptional units in a ring have also been studied recently. Let $\varphi_k(R,c)$ denote the number of representations of $c \in R$ as a sum of $k$ exceptional units in a ring $R$. 
In \cite{yang}, the authors proved a formula for $\varphi_k({\mathbb Z}_n,c)$ where ${\mathbb Z}_n$ denotes the ring of integers modulo $n$, Miguel \cite{Mig18} found the number $\varphi_k(R,c)$ for an arbitrary finite commutative ring $R$, while in \cite{Dol19} the number $\varphi_k(R,c)$ has been studied in the setting of noncommutative rings. 

Set 
\begin{equation*}
H(R)= \{r_1+ r_2i + r_3j + r_4k : r_i \in R\}=R \oplus Ri \oplus Rj \oplus Rk,
\end{equation*}
where $ i^2 = j^2 = k^2 = ijk = -1$, and $ij = -ji$. Then, with the componentwise addition, multiplication subject to the given relations, and the convention that $ i,j,k $ commute with $ R $ elementwise, $H(R)$ is a ring called the $\it{quaternion}$ $\it{ring}$ over $R$, which is a generalization of the Hamilton's division ring of real quaternions $\mathbb{H}=H(\mathbb{R})$. Quaternion rings and their properties have also been quite heavily studied recently. For example, in \cite{arist1, arist2} the authors investigated the ring $H(\ZZ_p)$ for a prime number $p$, while in \cite{mig}, the number of idempotents and the number of zero-divisors in $H(\ZZ_p)$ were found.
In \cite{ghara}, the structure of superderivations of the quaternion rings over some special $\ZZ_2$-graded rings was determined, while in \cite{ghara2} the authors described the form of some mappings on quaternion rings.
Finally, in \cite{Cher22}, the structure of the ring $H(R)$ was described.

In this paper, we investigate the sums of exceptional units in a quaternion ring over a finite commutative ring. In the next section, we gather the definitions and known results that we shall use throughout the paper. We prove that in order to find the number of representations of an element in $H(R)$ as a sum of $k$ exceptional units, we can limit ourselves to studying the quaternion rings over local rings (see Theorem \ref{local}). Thereby, In Section 3, we study the quaternion rings over finite commutative local rings of even orders. We find the number of representations of any element as a sum of $k$ exceptional units for any integer $k \geq 2$ (see Theorem \ref{main}). In Section 4, we turn our attention to the quaternion rings over finite commutative local rings of odd orders. It turns out that the problem is much more complex in this case. However, we do manage to find the number of representations of many elements as a sum of $2$ exceptional units, and we find the bounds for this number in the remaining cases (see Theorem \ref{main2}). We manage to achieve this by utilizing the isomorphism between the quaternion ring and the ring of $2$-by-$2$ matrices over the factor field, and studying the properties of sumsets of exceptional units in the matrix case.

\bigskip

\section{\textbf{Preliminaries}}
\noindent

Let $R$ be a ring (with identity). We shall denote its Jacobson radical by $ J=J(R) $. A ring $ R $ is called \textit{local} if $ R/J $ is a division ring.
We denote the multiplicative group of units of $ R $ by $ R^{*} $ and we let $ |A| $ stand for the order
of any finite set $ A $. 
We write $ R^{**} $ for the set of all exceptional units of $ R $. For any integer $k \geq 2$ and any $c \in R $, we define $ \varphi_{k}(R, c) $ to be the number of representations of $ c $ as the sum of $ k $ exceptional units of R, i.e.,
\begin{equation*}
\varphi_{k}(R, c) = | \{(x_{1}, \dots , x_{k}) \in (R^{**})^{k} : x_{1} + \dots + x_{k} = c \}|.
\end{equation*}

We will denote the ring of $2$-by-$2$ matrices over a field $F$ by $M_2(F)$ and the group of invertible matrices therein by $GL_2(F)$. The identity matrix in $M_2(F)$ will be denoted by $I$, the spectrum of a matrix $A$ will be denoted by $\Spec(A)$ and the centralizer of an invertible matrix $A$ will be denoted by $C_{GL_2(F)}(A)$. For a matrix $A \in M_2(F)$ we will denote its entry at position $(i,j)$ by $A_{ij}$.
Finally, the finite field of order $q$ will be denoted by $GF(q)$ and the characteristic of a field $F$ will be denoted by $\charac F$.

In \cite{Cher19}, the following theorem has been stated.
\begin{theorem}\cite[Theorem 3.10]{Cher19}
\label{Th1}
Let $ F $ be a \emph{(}not necessarily finite\emph{)} field whose characteristic is an odd prime number and let $ R $ be an algebra over $ F $. Then
\begin{equation*}
H(R) \cong M_{2}(R).
\end{equation*}
\end{theorem}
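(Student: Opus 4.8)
The plan is to construct an explicit isomorphism $\phi\colon H(R)\to M_{2}(R)$ by choosing matrices $I_{i},I_{j},I_{k}\in M_{2}(F)$ that reproduce the defining relations $i^{2}=j^{2}=k^{2}=ijk=-1$ and $ij=-ji$, and then setting $\phi(r_{1}+r_{2}i+r_{3}j+r_{4}k)=r_{1}I+r_{2}I_{i}+r_{3}I_{j}+r_{4}I_{k}$, where $r_{\ell}I$ denotes the scalar matrix. Since $i,j,k$ commute with $R$ elementwise and the chosen matrices will have entries in the prime field $\FF_{p}\subseteq F$, which is central in $R$, the scalar matrices $rI$ commute with $I_{i},I_{j},I_{k}$; hence $\phi$ is $R$-linear on both sides, and verifying that it is a ring homomorphism reduces to checking the quaternion relations for $I_{i},I_{j},I_{k}$ alone. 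In effect everything happens inside $M_{2}(F)$, with $R$ present only as the $F$-algebra into whose matrix ring we base-change.

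The crux is producing such matrices, and this is exactly where odd characteristic enters. First I would record the elementary fact that in a field of odd characteristic $-1$ is a sum of two squares: the squares form a subset of $\FF_{p}$ of size $(p+1)/2$, and so do the elements of the form $-1-b^{2}$, so by cardinality the two sets meet and there exist $a,b\in\FF_{p}\subseteq F$ with $a^{2}+b^{2}=-1$. With such $a,b$ I would take
\[
I_{i}=\begin{pmatrix} a & b \\ b & -a \end{pmatrix},\qquad I_{j}=\begin{pmatrix} 0 & -1 \\ 1 & 0 \end{pmatrix},\qquad I_{k}=I_{i}I_{j}.
\]
A direct computation gives $I_{i}^{2}=(a^{2}+b^{2})I=-I$ and $I_{j}^{2}=-I$, together with $I_{i}I_{j}=-I_{j}I_{i}$; then $I_{k}=I_{i}I_{j}$ satisfies $I_{k}^{2}=-I$ and $I_{i}I_{j}I_{k}=I_{k}^{2}=-I$, so all the quaternion relations hold and $\phi$ is a well-defined ring homomorphism.

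It remains to show $\phi$ is bijective. Since $\phi$ is $R$-linear and sends the free $R$-basis $\{1,i,j,k\}$ of $H(R)$ to $\{I,I_{i},I_{j},I_{k}\}$, it suffices to check that these four matrices form a basis of the free $R$-module $M_{2}(R)$, i.e. that the change-of-basis matrix expressing them in the matrix units $E_{11},E_{12},E_{21},E_{22}$ is invertible over $R$. That matrix has entries in $F$, and I would compute its determinant to be $4$ by a short cofactor expansion, using $a^{2}+b^{2}=-1$. Because $\charac F=p$ is odd, $4$ is a unit in $F$, hence in $R$, so the change-of-basis matrix is invertible and $\phi$ is an isomorphism. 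The main obstacle is the construction of $I_{i}$: the whole argument rests on solving $a^{2}+b^{2}=-1$ in $F$, which is possible precisely because $F$ contains $\FF_{p}$ with $p$ odd, and both this solvability and the invertibility of $4$ fail in characteristic $2$, matching the hypothesis. Alternatively, bijectivity can be obtained abstractly: $H(F)$ is a four-dimensional central simple $F$-algebra, so the nonzero homomorphism $H(F)\to M_{2}(F)$ is injective and, by dimension, an isomorphism, after which one base-changes to $R$.
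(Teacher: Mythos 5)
Your proof is correct and follows essentially the same route as the proof sketched in the paper's source for this cited result from \cite{Cher19}: an explicit linear map determined by a solution of $x^{2}+y^{2}=-1$, a verification of multiplicativity, and bijectivity obtained from the non-vanishing of a $4\times 4$ change-of-basis determinant (your $4$, the paper's $\tfrac{1}{4}$), which is a unit exactly because the characteristic is odd. The only inessential differences are the direction of the map ($H(R)\to M_{2}(R)$ rather than from the matrix ring to the quaternions) and your pigeonhole argument placing the solution of $a^{2}+b^{2}=-1$ in the prime field, which conveniently guarantees that the chosen matrices are central over a possibly noncommutative $R$.
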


Also, in \cite{Cher22}, Cheraghpour et al. proved the following theorem:
\begin{theorem}\cite[Theorem 3.6]{Cher22}
\label{Th3}
Let $ R $ be a ring with $ 2^{-1} \in R $. Then
\begin{equation*}
J(H(R))=H(J(R)).
\end{equation*}
\end{theorem}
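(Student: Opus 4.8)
The plan is to prove the two inclusions $H(J(R))\subseteq J(H(R))$ and $J(H(R))\subseteq H(J(R))$ separately, writing $J=J(R)$ throughout. The backbone of the argument is that reducing quaternion coefficients modulo $J$ gives a surjective ring homomorphism $\pi\colon H(R)\to H(R/J)$ with kernel exactly $H(J)$, so $H(R)/H(J)\cong H(R/J)$. Since a surjection sends the radical into the radical, the inclusion $J(H(R))\subseteq H(J)$ follows once I show $H(R/J)$ is semiprimitive, i.e. $J(H(R/J))=0$; the reverse inclusion amounts to showing that the ideal $H(J)$ is quasi-regular.

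For $H(J)\subseteq J(H(R))$, first note that $H(J)$ is a two-sided ideal, since every coefficient of a product of an element of $H(R)$ with an element of $H(J)$ is an $R$-combination of elements of $J$, hence lies in $J$. I would then show that $H(J)$ annihilates every simple left $H(R)$-module $M$. Such an $M$ is cyclic, hence generated over $R$ by $m,im,jm,km$ for any $0\neq m\in M$, so $M$ is finitely generated as an $R$-module; moreover $JM$ is an $H(R)$-submodule (using that $J$ is an ideal and commutes with $i,j,k$), so simplicity forces $JM=0$ or $JM=M$, and Nakayama's lemma rules out the latter. Thus $JM=0$, whence $H(J)M=0$, giving $H(J)\subseteq J(H(R))$; note this direction uses neither $2^{-1}$ nor commutativity. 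For commutative $R$ one can instead argue explicitly: for $a\in H(J)$ the product $(1+a)(1+\bar a)=1+(2a_1+N(a))$ lies in $1+J$ and is therefore a unit of $R$, and symmetrically on the other side, so $1+a$ is a unit.

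It remains to prove $J(H(S))=0$ for $S:=R/J$, which satisfies $J(S)=0$ and $2^{-1}\in S$; this is where the hypothesis $2^{-1}\in R$ is essential. The key auxiliary fact is $J(H(S))\cap S\subseteq J(S)$: if $x\in J(H(S))\cap S$ then for each $s\in S$ the element $u=1-sx\in S$ is a unit of $H(S)$, and writing its inverse as $v_1+v_2i+v_3j+v_4k$ and comparing components in $uu^{-1}=u^{-1}u=1$ (recall $u$ commutes with $i,j,k$) forces $uv_1=v_1u=1$ with $v_1\in S$, so $u$ is already a unit of $S$, giving $x\in J(S)$. Now take $q=q_1+q_2i+q_3j+q_4k\in J(H(S))$. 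Since $J(H(S))$ is a two-sided ideal and $i,j,k$ are units (with $i^{-1}=-i$, etc.), the conjugates $iqi^{-1},\,jqj^{-1},\,kqk^{-1}$ again lie in $J(H(S))$; a direct computation gives, e.g., $iqi^{-1}=q_1+q_2i-q_3j-q_4k$, and suitable combinations of $q$ with these three conjugates, multiplied by $2^{-1}$, isolate each scalar coefficient, showing $q_1,q_2,q_3,q_4\in J(H(S))\cap S$. By the auxiliary fact these vanish, so $q=0$ and $J(H(S))=0$, completing the second inclusion.

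The step I expect to be the main obstacle is this second inclusion, and within it the two places where the quaternion structure and the hypothesis genuinely enter: the extraction of the individual coefficients, which only closes up because $2$ is invertible (the conjugation identities naturally produce factors of $2$), and the lemma $J(H(S))\cap S\subseteq J(S)$, whose proof hinges on the fact that the inverse in $H(S)$ of a scalar unit is again scalar. For finite commutative $R$ one can shortcut the entire second inclusion: then $R/J$ is a finite product of finite fields of odd order, and Theorem \ref{Th1} identifies $H$ of each such field with a matrix ring $M_2$, which is simple and hence semiprimitive, so $J(H(R/J))=0$ directly.
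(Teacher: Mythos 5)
Your proposal is correct, but there is nothing in the paper to compare it against: Theorem \ref{Th3} is quoted verbatim from \cite[Theorem 3.6]{Cher22} and used as a black box, so the paper contains no proof of this statement at all. Your argument is a valid self-contained substitute, and both halves check out. For $H(J)\subseteq J(H(R))$, the module-theoretic route is sound: a simple left $H(R)$-module $M$ is generated over $R$ by $m,im,jm,km$, the subset $JM$ is an $H(R)$-submodule because $H(R)J\subseteq H(J)$ and $H(J)M\subseteq JM$, and noncommutative Nakayama excludes $JM=M$; as you note, this needs neither $2^{-1}$ nor commutativity, while the norm computation $(1+a)(1+\bar a)=(1+\bar a)(1+a)=1+2a_1+N(a)\in 1+J$ is the quicker route when $R$ is commutative. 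For $J(H(R))\subseteq H(J)$, reducing via the surjection $\pi\colon H(R)\to H(S)$, $S=R/J$, with kernel $H(J)$ to the claim $J(H(S))=0$ is standard, and your two key steps are correct: the coefficient comparison showing that a scalar $u\in S$ that is invertible in $H(S)$ is already invertible in $S$ (this uses precisely that $H(S)$ is free over $S$ on $1,i,j,k$ and that scalars commute with $i,j,k$), and the averaging over conjugates, e.g. $q+iqi^{-1}+jqj^{-1}+kqk^{-1}=4q_1$ and $q+iqi^{-1}-jqj^{-1}-kqk^{-1}=4q_2i$, which isolates the coefficients and is exactly where invertibility of $2$ (in fact of $4=2^2$) is indispensable. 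Your closing remark is also worth stressing: in the only situation this paper actually uses the theorem (finite commutative local $R$ of odd order, in Theorem \ref{main2}), the second inclusion collapses, since $R/J$ is a finite field of odd characteristic, so Theorem \ref{Th1} gives $H(R/J)\cong M_2(R/J)$, a simple ring, whence $J(H(R/J))=0$ immediately; combined with your first inclusion this proves the theorem in the generality the paper needs, without the conjugation argument.
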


Dol{\v z}an, in \cite{Dol19}, proved the following:
\begin{corollary}\cite[Corollary 2.3]{Dol19}
\label{Th2}
Let $ R $ be a finite ring with Jacobson radical $ J(R) $. Choose $ c \in R $ and $ k \geq 2$. Then 
\begin{equation*}
\varphi_{k}(R, c)=|J(R)|^{k-1}\varphi_{k}(R/J(R), \overline{c}),
\end{equation*}
where $\overline{c}=c+J(R)$.
\end{corollary}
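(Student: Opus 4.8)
The plan is to reduce the counting problem over $R$ to the one over $R/J(R)$ by means of the canonical quotient map, exploiting the fact that units—and hence exceptional units—are detected modulo the Jacobson radical, and then to show that the reduction map on solution tuples is a fibration with all fibers of the same size $|J(R)|^{k-1}$.

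First I would record the key lemma: for any ring $R$, an element $x \in R$ is a unit if and only if its image $\bar x = x + J(R)$ is a unit in $R/J(R)$. The forward implication is immediate; for the converse, if $\bar x$ is invertible, choose $y$ with $xy \equiv yx \equiv 1 \pmod{J(R)}$, so that $xy = 1 - a$ and $yx = 1 - b$ with $a, b \in J(R)$. Since $1 - a$ and $1 - b$ are units (a standard characterization of the Jacobson radical; in the finite setting this is even more transparent), $x$ acquires both a right inverse $y(1-a)^{-1}$ and a left inverse $(1-b)^{-1}y$, hence is a unit. Because $x$ is an exceptional unit exactly when both $x$ and $1 - x$ are units, and because $\overline{1 - x} = 1 - \bar x$, it follows that $x \in R^{**}$ if and only if $\bar x \in (R/J(R))^{**}$. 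In particular, exceptionality depends only on the coset $x + J(R)$, so each coset either consists entirely of exceptional units or contains none.

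Next I would analyze the fibers of the induced reduction map on $k$-tuples. Reduction modulo $J(R)$ carries any solution $(x_1, \dots, x_k) \in (R^{**})^{k}$ of $x_1 + \dots + x_k = c$ to a solution $(\bar x_1, \dots, \bar x_k) \in ((R/J(R))^{**})^{k}$ of $\bar x_1 + \dots + \bar x_k = \bar c$, using the previous paragraph together with additivity of the quotient map. I would then fix one downstairs solution $(\bar v_1, \dots, \bar v_k)$, pick lifts $v_i \in R^{**}$, and parametrize an arbitrary lift as $(v_1 + j_1, \dots, v_k + j_k)$ with $j_i \in J(R)$; every such tuple automatically has exceptional entries by the coset remark. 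The sum condition $\sum_i (v_i + j_i) = c$ then reads $\sum_i j_i = c - \sum_i v_i =: d$, and $d \in J(R)$ precisely because $\sum_i \bar v_i = \bar c$. Since $J(R)$ is an additive subgroup, the number of tuples $(j_1, \dots, j_k) \in J(R)^{k}$ with $j_1 + \dots + j_k = d$ is exactly $|J(R)|^{k-1}$: the first $k-1$ coordinates are free, and the last is then forced and still lies in $J(R)$. In particular the fiber is never empty, so the reduction map is onto the downstairs solution set.

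Finally, summing the constant fiber count $|J(R)|^{k-1}$ over all $\varphi_{k}(R/J(R), \bar c)$ solutions downstairs yields the claimed identity $\varphi_{k}(R, c) = |J(R)|^{k-1}\varphi_{k}(R/J(R), \bar c)$. The only genuinely delicate point is the unit-lifting lemma and its corollary that exceptionality is a coset-invariant property; once that is secured, the rest is a clean fibration-and-count argument, with finiteness of $R$ serving only to guarantee that all the quantities involved are finite.
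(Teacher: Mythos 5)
Your proof is correct, and it is the standard argument: units (hence exceptional units, since $\overline{1-x}=1-\bar x$) are detected modulo $J(R)$, so each coset either consists entirely of exceptional units or contains none, and the reduction map on solution tuples is surjective with every fiber of size exactly $|J(R)|^{k-1}$ by the free-choice-of-$k-1$-coordinates count. Note that the paper itself does not prove this statement at all --- it imports it verbatim as \cite[Corollary 2.3]{Dol19} --- so there is no internal proof to compare against; your argument is a correct, self-contained reconstruction of the proof in that reference, including the one genuinely delicate point (lifting a two-sided inverse from $R/J(R)$ and using that $1-a$ is a unit for $a \in J(R)$ to get both a left and a right inverse for $x$).
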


The following lemma states that we may only consider the sums of exceptional units in directly indecomposable rings.

\begin{lemma}\cite[Lemma 2]{Mig18}
\label{lm4}
Let $ R \cong R_{1} \oplus \dots \oplus R_{t}$ be the decomposition of ring $ R $ into a direct sum of local rings. Let $ \psi: R \rightarrow R_{1} \oplus \dots \oplus R_{t} $ be an isomorphism. If $ c \in R $ and $ \psi(c)=(c_{1}, \dots, c_{t}) $, then
\begin{equation*}
\varphi_{k}(R,c)=\prod_{i=1}^{t}\varphi_{k}(R_{i},c_{i}).
\end{equation*}
\end{lemma}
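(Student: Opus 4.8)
The plan is to exhibit an explicit bijection between the set of $k$-tuples of exceptional units of $R$ summing to $c$ and the Cartesian product, over $i = 1, \dots, t$, of the sets of $k$-tuples of exceptional units of $R_i$ summing to $c_i$; counting cardinalities then yields the product formula. Since $\psi$ is a ring isomorphism, it suffices to work inside $R_{1} \oplus \dots \oplus R_{t}$ and to replace $c$ by $\psi(c) = (c_{1}, \dots, c_{t})$.

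First I would identify the exceptional units of the direct sum. Writing $\pi_{i}$ for the projection onto $R_{i}$, an element $u = (u_{1}, \dots, u_{t})$ is a unit of $R_{1} \oplus \dots \oplus R_{t}$ precisely when each $u_{i} = \pi_{i}(u)$ is a unit of $R_{i}$, since inverses are computed componentwise and the identity is $(1, \dots, 1)$. Applying this to both $u$ and $1 - u = (1 - u_{1}, \dots, 1 - u_{t})$ shows that $u$ is exceptional if and only if every $u_{i}$ is exceptional in $R_{i}$. Hence, under $\psi$, the set $R^{**}$ corresponds exactly to $R_{1}^{**} \times \dots \times R_{t}^{**}$.

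Next I would transport the summation condition coordinatewise. Because addition in the direct sum is componentwise, a $k$-tuple $(x_{1}, \dots, x_{k})$ of exceptional units of $R$ satisfies $x_{1} + \dots + x_{k} = c$ if and only if, for every $i$, the $k$-tuple $(\pi_{i}\psi(x_{1}), \dots, \pi_{i}\psi(x_{k}))$ of exceptional units of $R_{i}$ satisfies $\pi_{i}\psi(x_{1}) + \dots + \pi_{i}\psi(x_{k}) = c_{i}$. The assignment sending $(x_{1}, \dots, x_{k})$ to the $t$-tuple of these coordinate tuples is therefore a well-defined map into $\prod_{i=1}^{t} \{(y_{1}, \dots, y_{k}) \in (R_{i}^{**})^{k} : y_{1} + \dots + y_{k} = c_{i}\}$; it is a bijection because $\psi$ is an isomorphism, so each $x_{j} = \psi^{-1}(\pi_{1}\psi(x_{j}), \dots, \pi_{t}\psi(x_{j}))$ is recovered from its coordinates, and conversely any choice of coordinate tuples assembles into a valid $k$-tuple over $R$. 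Taking cardinalities gives $\varphi_{k}(R,c) = \prod_{i=1}^{t} \varphi_{k}(R_{i}, c_{i})$.

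The only delicate point is the characterization of exceptional units in the direct sum; everything else is the bookkeeping that both addition and the unit structure respect the direct-sum decomposition. Note that the hypothesis that each $R_{i}$ is local plays no role in this argument, which works verbatim for any finite direct-product decomposition, so I would not invoke locality at all.
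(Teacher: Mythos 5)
Your proof is correct. Note that the paper itself gives no proof of this lemma at all --- it is quoted verbatim as \cite[Lemma 2]{Mig18} --- so there is nothing internal to compare against; your componentwise bijection (units, hence exceptional units, decompose coordinatewise in a direct sum, and the summation condition splits coordinatewise) is exactly the standard argument behind the cited result. Your closing observation is also accurate: locality of the $R_i$ is irrelevant to this counting statement and is only imposed because the paper applies the lemma to the canonical decomposition of a finite commutative ring into local rings.
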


Since any finite commutative ring can be decomposed into a finite direct sum of finite commutative local rings, we have the following result. 

\begin{theorem}
\label{local}
     Let $k \geq 2$ be an integer and let $R$ be a finite commutative ring, where $ R \cong R_{1} \oplus \dots \oplus R_{t} $ is the decomposition of $ R $ into finite local commutative rings $ R_{1}, \dots, R_{t} $.
   Then for any $c \in H(R)$ we have
   \begin{equation*}
      \varphi_{k}(H(R), c)=\prod_{i=1}^{t}\varphi_{k}(H(R_{i}), c_{i}),
   \end{equation*}
   where $ \psi : H(R) \rightarrow H(R_{1}) \oplus \dots \oplus H(R_{t}) $ defined by $ \psi(c)=(c_{1}, \dots, c_{t}) $ is a ring isomorphism.
\end{theorem}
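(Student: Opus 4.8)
The plan is to split the argument into two essentially independent parts: first, establishing that the stated map $\psi$ is a ring isomorphism (this is the functoriality of the quaternion construction with respect to direct sums), and second, showing that $\varphi_k$ factors across the resulting direct sum. For the first part, I would fix the underlying commutative-ring isomorphism $\phi \colon R \to R_1 \oplus \dots \oplus R_t$ and write $\phi(r) = (r^{(1)}, \dots, r^{(t)})$ for $r \in R$. Given $c = r_1 + r_2 i + r_3 j + r_4 k \in H(R)$, set $\psi(c) = (c_1, \dots, c_t)$, where $c_s = r_1^{(s)} + r_2^{(s)} i + r_3^{(s)} j + r_4^{(s)} k \in H(R_s)$. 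That $\psi$ is additive and bijective is immediate, since it is assembled coordinatewise from the additive bijection $\phi$ applied in each of the four quaternion slots. The only point requiring genuine care is multiplicativity: I would note that the product in $H(R)$ is governed entirely by the fixed structure constants arising from $i^2 = j^2 = k^2 = ijk = -1$, $ij = -ji$, together with the convention that $i, j, k$ commute with $R$ elementwise. Since these structure constants are identical in every $H(R_s)$ and $\phi$ is a ring homomorphism on the scalar part, expanding a product $cc'$ in $H(R)$ and applying $\phi$ in each coordinate reproduces precisely the product of the images in $\bigoplus_s H(R_s)$. Hence $\psi$ is a ring isomorphism.

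For the second part I would use that $\varphi_k$ is invariant under ring isomorphism, so that $\varphi_k(H(R), c) = \varphi_k\bigl(H(R_1) \oplus \dots \oplus H(R_t), (c_1, \dots, c_t)\bigr)$, and then prove the multiplicative factorization over an arbitrary finite direct sum. The key observation is that in any finite direct sum of rings $S = S_1 \oplus \dots \oplus S_t$, an element $(a_1, \dots, a_t)$ is a unit if and only if each $a_s$ is a unit in $S_s$, and $1 - (a_1, \dots, a_t) = (1 - a_1, \dots, 1 - a_t)$ is a unit if and only if each $1 - a_s$ is a unit; therefore $(a_1, \dots, a_t)$ is an exceptional unit exactly when each coordinate $a_s$ is an exceptional unit, i.e. $S^{**} = S_1^{**} \times \dots \times S_t^{**}$. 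Writing each of the $k$ summands coordinatewise, the single equation $x_1 + \dots + x_k = (c_1, \dots, c_t)$ decouples into $t$ independent equations, one in each $S_s$, and the exceptional-unit constraints decouple in the same way. Counting solutions coordinate by coordinate then yields $\varphi_k(S, c) = \prod_{s=1}^t \varphi_k(S_s, c_s)$. Applying this with $S_s = H(R_s)$ gives the claim.

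I expect the main subtlety to be conceptual rather than computational: one cannot simply quote Lemma \ref{lm4} verbatim, because the summands $H(R_s)$ are quaternion rings over local rings and need \emph{not} themselves be local---indeed, by Theorem \ref{Th1} they may be full matrix rings $M_2(R_s)$. The factorization above, however, uses only the coordinatewise description of (exceptional) units in a direct sum and never the locality of the factors, so it applies here unchanged; the locality of the $R_s$ enters solely to guarantee that the decomposition $R \cong R_1 \oplus \dots \oplus R_t$ exists in the first place. The one calculation I would actually write out in full is the verification that $\psi$ respects the quaternion multiplication, since that is where the structure constants must be tracked carefully; everything else is bookkeeping.
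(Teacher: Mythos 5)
Your proof is correct and follows the same two-step skeleton as the paper: first establish the ring isomorphism $H(R) \cong H(R_{1}) \oplus \dots \oplus H(R_{t})$ (which the paper dismisses as ``easily seen''), then factor $\varphi_{k}$ across the direct sum. The difference lies in the second step. The paper concludes by citing Lemma \ref{lm4} verbatim, but as you correctly observe, that lemma is stated for a decomposition into \emph{local} rings, and the summands $H(R_{i})$ need not be local: for $R_{i}$ of odd order, $H(R_{i})$ modulo its Jacobson radical is the full matrix ring $M_{2}(R_{i}/J(R_{i}))$ by Theorems \ref{Th3} and \ref{Th1}, which is not a division ring, so $H(R_{i})$ is never local in that case. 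The paper's appeal to Lemma \ref{lm4} is therefore formally loose --- harmless in substance, since the proof of that lemma in \cite{Mig18} never actually uses locality of the factors, but loose nonetheless. Your replacement argument, namely that in any finite direct sum $S = S_{1} \oplus \dots \oplus S_{t}$ one has $S^{**} = S_{1}^{**} \times \dots \times S_{t}^{**}$, so that the equation $x_{1} + \dots + x_{k} = (c_{1},\dots,c_{t})$ and the exceptional-unit constraints decouple coordinatewise, is exactly the missing justification and makes the proof self-contained; the price is a few extra lines that the paper's one-line citation avoids. Your fuller verification that $\psi$ respects the quaternion multiplication likewise just fills in detail the paper leaves implicit.
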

\begin{proof}
Since $ R \cong R_{1} \oplus \dots \oplus R_{t} $, then it can be easily seen that $ H(R) \cong H(R_{1}) \oplus \dots \oplus H(R_{t}) $. The rest follows from Lemma \ref{lm4}.
\end{proof}

This now implies that we can limit ourselves to studying finite local commutative rings. In the next section, we shall investigate the local rings of even order, while the final section deals with the local rings of odd order.

\bigskip

\section{\textbf{Rings of even order}}
\noindent

It is well known that a finite local ring has order $p^t$ for some prime $p$ and integer $t$.
In this section, we examine the case when $R$ is a finite commutative local ring of order of a power of $2$. We have the following lemma, which generalizes Theorem 4.4 from \cite{Cher22}.

\begin{lemma}
\label{case2local}
   Let $R$ be a finite commutative local ring such that $2$ is a zero-divisor. Then $H(R)$ is a local ring
   with $J(H(R))=\{x_1+x_2i+x_3j+x_4k:x_1+x_2+x_3+x_4 \in J(R)\}$.
\end{lemma}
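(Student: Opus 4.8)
The plan is to reduce the computation to the residue field $F := R/J(R)$ and to exploit the fact that in characteristic $2$ the quaternion ring collapses to a commutative local algebra. First I would observe that since $R$ is finite and local and $2$ is a zero-divisor, $2$ is a non-unit and hence lies in the maximal ideal $J(R)$; consequently $\charac F = 2$.

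Next, coefficientwise reduction $\rho \colon H(R) \to H(F)$, sending $x_1 + x_2 i + x_3 j + x_4 k$ to $\overline{x_1} + \overline{x_2}\,i + \overline{x_3}\,j + \overline{x_4}\,k$, is a surjective ring homomorphism (the structure constants are $\pm 1$, so they survive reduction) with kernel $H(J(R))$. Since $R$ is finite local, $J(R)$ is nilpotent, and the bilinearity of the quaternion product gives $H(J(R))^m \subseteq H(J(R)^m)$, so $H(J(R))$ is a nilpotent ideal of $H(R)$; in particular $H(J(R)) \subseteq J(H(R))$. The standard fact that $J(A/N) = J(A)/N$ for a nilpotent ideal $N$, applied to $A = H(R)$ and $N = H(J(R))$ with $A/N \cong H(F)$, then yields $J(H(R)) = \rho^{-1}\bigl(J(H(F))\bigr)$, so it only remains to identify $J(H(F))$.

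The heart of the argument is the structure of $H(F)$ in characteristic $2$. Here $-1 = 1$, so $i^2 = j^2 = k^2 = 1$ and $ij = -ji = ji$; thus $H(F)$ is commutative, $k = ij$, and $H(F) = F[i,j]$. I would exhibit the isomorphism $H(F) \cong F[x,y]/(x^2 - 1, y^2 - 1)$ (both sides have $F$-basis $1, i, j, k$ matching $1, x, y, xy$), and since $x^2 - 1 = (x-1)^2$ and $y^2 - 1 = (y-1)^2$ in characteristic $2$, the substitution $u = x-1$, $v = y-1$ gives $H(F) \cong F[u,v]/(u^2, v^2)$. This is a local Artinian ring with residue field $F$ and maximal ideal $(u,v) = (i-1, j-1)$; concretely, reduction modulo this ideal sends $i, j, k \mapsto 1$, so that $J(H(F)) = \{y_1 + y_2 i + y_3 j + y_4 k : y_1 + y_2 + y_3 + y_4 = 0\}$.

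Finally, pulling this back along $\rho$ yields $J(H(R)) = \{x_1 + x_2 i + x_3 j + x_4 k : x_1 + x_2 + x_3 + x_4 \in J(R)\}$, exactly as claimed, and $H(R)/J(H(R)) \cong H(F)/J(H(F)) \cong F$ is a field, hence a division ring, so $H(R)$ is local. The main obstacle is the characteristic-$2$ structure computation for $H(F)$ — recognizing that the quaternion relations degenerate into a commutative local algebra and pinning down its radical; once that is in hand, the finiteness of $R$ and the nilpotence of $J(R)$ let the radical lift routinely from $F$ to $R$.
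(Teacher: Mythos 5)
Your proof is correct, but it takes a genuinely different route from the paper's. The paper argues directly inside $H(R)$ via the quaternion conjugate and norm: it first shows $x$ is a zero-divisor in $H(R)$ if and only if $x_1^2+x_2^2+x_3^2+x_4^2 \in J(R)$, then uses $2 \in J(R)$ and commutativity to show the zero-divisors are closed under addition (hence $H(R)$ is local, by the finite-ring fact that this closure is equivalent to locality), and finally converts the quadratic condition $x_1^2+x_2^2+x_3^2+x_4^2 \in J(R)$ into the linear one $x_1+x_2+x_3+x_4 \in J(R)$ using $2 \in J(R)$ and primeness of $J(R)$. You instead reduce coefficientwise to the residue field $F$ of characteristic $2$, identify $H(F) \cong F[u,v]/(u^2,v^2)$ as a commutative local algebra with residue field $F$, read off its radical as the kernel of $i,j,k \mapsto 1$, and lift back along the reduction map using nilpotence of $H(J(R))$ and the standard fact $J(A/N)=J(A)/N$ for $N \subseteq J(A)$. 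Each approach has its merits: the paper's is self-contained and yields the norm characterization of zero-divisors as a byproduct; yours is more structural and gives for free that $H(R)/J(H(R)) \cong R/J(R)$ — a fact the paper needs later (in the proof of its Theorem on rings of even order it asserts $H(R)/J(H(R)) \cong GF(2^r)$, which under the paper's route requires a further order count and Wedderburn-type argument, but under yours is immediate) — and it also avoids relying on the unproved folklore equivalence between locality and additive closure of zero-divisors in finite rings.
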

\begin{proof}
   Since $2$ is a zero-divisor in $R$, we have $2 \in J(R)$. Let us firstly prove that $x=x_1+x_2i+x_3j+x_4k$ is a zero-divisor in $H(R)$ if and only if $x_1^2+x_2^2+x_3^2+x_4^2 \in J(R)$.
   
Let $x=x_1+x_2i+x_3j+x_4k$ be a zero-divisor in $ H(R) $. Then there exists $y=y_1+y_2i+y_3j+y_4k$ in $ H(R)  $ such that $ xy=0 $ or $ yx=0 $. If $ xy=0 $, then $ \overline{x}xy\overline{y}=0 $, where $ \overline{x}=x_1-x_2i-x_3j-x_4k $ and $ \overline{y}=y_1-y_2i-y_3j-y_4k $. So, $ (x_1^2+x_2^2+x_3^2+x_4^2)(y_1^2+y_2^2+y_3^2+y_4^2)=0 $. Therefore $ x_1^2+x_2^2+x_3^2+x_4^2 $ is a zero-divisor in $ R $. Since $ R $ is local, then $ x_1^2+x_2^2+x_3^2+x_4^2 \in J(R) $. The case $ yx=0 $ can be done in a similar way.

Now, let $ x_1^2+x_2^2+x_3^2+x_4^2 \in J(R) $. Then $ x_1^2+x_2^2+x_3^2+x_4^2 $ is a zero-divisor in $ R $. So, there exists $ y \in R $ such that $ (x_1^2+x_2^2+x_3^2+x_4^2)y=0 $. Therefore $ (x_1+x_2i+x_3j+x_4k)(x_1-x_2i-x_3j-x_4k)y=0 $, which implies that either $ x_1+x_2i+x_3j+x_4k $ is a zero-divisor in $ H(R) $ or $\overline{x}y=0$, but then also $x\overline{y}=0$, so $x$ is indeed a zero-divisor in $H(R)$.
   
   Now, choose two zero-divisors $x=x_1+x_2i+x_3j+x_4k, y=y_1+y_2i+y_3j+y_4k$ in $H(R)$.
   Then $x_1^2+x_2^2+x_3^2+x_4^2, y_1^2+y_2^2+y_3^2+y_4^2 \in J(R)$, so 
   $(x_1+y_1)^2+(x_2+y_2)^2+(x_3+y_3)^2+(x_4+y_4)^2=x_1^2+x_2^2+x_3^2+x_4^2+y_1^2+y_2^2+y_3^2+y_4^2-2(x_1y_1 + x_2y_2 + x_3y_3 +x_4y_4) \in J(R)$ since $R$ is commutative. Therefore $x+y$ is a zero-divisor in $H(R)$. We have therefore proved that the set of zero-divisors in $H(R)$ is closed under addition, which is equivalent to the fact that $H(R)$ is a local ring. 
   This further implies that $J(H(R))=\{x_1+x_2i+x_3j+x_4k:x_1^2+x_2^2+x_3^2+x_4^2 \in J(R)\}$.
   Since $R$ is commutative, so $x_1^2+x_2^2+x_3^2+x_4^2=(x_1+x_2+x_3+x_4)^2 - 2(x_1x_2+x_1x_3+x_1x_4+x_2x_3+x_2x_4+x_3x_4)$. Since $2 \in J(R)$, this implies that $x_1^2+x_2^2+x_3^2+x_4^2 \in J(R)$ if and only if $(x_1+x_2+x_3+x_4)^2 \in J(R)$. Furthermore, $J(R)$ is a maximal ideal, and hence a prime ideal  in $R$. So $(x_1+x_2+x_3+x_4)^2 \in J(R)$ if and only if $x_1+x_2+x_3+x_4 \in J(R)$.
\end{proof}

We shall need the following lemmas.

\begin{lemma}
\label{sumofbinomial}
  Let $k$ be an integer. Then $\sum_{i=0}^{\lfloor k/2 \rfloor}{k \choose 2i}=\sum_{i=0}^{\lfloor (k-1)/2 \rfloor}{k \choose 2i+1}=2^{k-1}$.
\end{lemma}
\begin{proof}
  We get this easily by either adding or subtracting the summands in $2^k=(1+1)^k=\sum_{i=0}^{k}{k \choose i}$ and $0=(1-1)^k=\sum_{i=0}^{k}(-1)^i{k \choose i}$. 
\end{proof}

\begin{lemma}
\label{sumsinfields}
  Let $k \geq 2$ be an integer and $F=GF(2^r)$ for some integer $r$. Then for any $c \in F$ we have
   \begin{equation*}
      \varphi_{k}(F, c)=\begin{cases}
\frac{(-1)^k}{2^r} \left(2^{r+k-1} -2^k + (2-2^{r})^k \right), \text { if } c = 0 \text { or } c = 1, \\
\frac{(-1)^k}{2^r} \left(-2^k + (2-2^{r})^k \right), \text { otherwise}.
\end{cases}
   \end{equation*}
\end{lemma}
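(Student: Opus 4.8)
The plan is to count ordered $k$-tuples directly by inclusion–exclusion. In the field $F=GF(2^r)$ every nonzero element is a unit, so the exceptional units are precisely $F^{**}=F\setminus\{0,1\}$, a set of size $2^r-2$. Writing $[P]$ for the indicator of a condition $P$, I use $[\,x\notin\{0,1\}\,]=1-[\,x=0\,]-[\,x=1\,]$ and express
\[
\varphi_k(F,c)=\sum_{\substack{(x_1,\dots,x_k)\in F^k\\ x_1+\cdots+x_k=c}}\ \prod_{i=1}^{k}\bigl(1-[\,x_i=0\,]-[\,x_i=1\,]\bigr),
\]
then expand the product. Each resulting term corresponds to an ordered partition of the index set $\{1,\dots,k\}$ into a block $Z$ of coordinates frozen to $0$, a block $O$ of coordinates frozen to $1$, and a block $I$ of free coordinates ranging over all of $F$; the sign attached to the term is $(-1)^{|Z|+|O|}$.

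Next I would count the completions, separating the cases $|I|\ge 1$ and $|I|=0$. If the free block $I$ is nonempty, say $|I|=m\ge 1$, then for any prescribed target value the number of free tuples on $I$ with that sum is $2^{r(m-1)}$, independent of the target, since $m-1$ entries are arbitrary and the last is then determined. Grouping the terms by $m$ and using the multinomial regrouping $\sum_{|Z|+|O|=k-m}\frac{k!}{|Z|!\,|O|!\,m!}=\binom{k}{m}2^{k-m}$ together with the sign $(-1)^{k-m}$, the total contribution of all terms with $m\ge 1$ is $\sum_{m=1}^{k}\binom{k}{m}(-2)^{k-m}(2^r)^{m-1}$. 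By the binomial theorem applied to $\bigl(2^r+(-2)\bigr)^k$ this collapses to $\frac{1}{2^r}\bigl((2^r-2)^k-(-2)^k\bigr)$, and crucially this piece does not depend on $c$.

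The only remaining contribution comes from the boundary terms with $m=0$, i.e.\ every coordinate frozen to $0$ or $1$. Such a fully frozen tuple is compatible with $c$ exactly when $c$ equals the sum of its frozen $1$'s, which in characteristic $2$ equals $o\cdot 1_F$ with $o=|O|$, hence lies in $\{0,1\}$ according to the parity of $o$. Therefore this boundary contributes nothing when $c\notin\{0,1\}$, while for $c\in\{0,1\}$ it equals $(-1)^k$ times the sum of $\binom{k}{o}$ over the $o$ of the parity selected by $c$ (even $o$ for $c=0$, odd $o$ for $c=1$); by Lemma \ref{sumofbinomial} both parity-restricted sums equal $2^{k-1}$, so the boundary contributes $(-1)^k 2^{k-1}=\frac{(-1)^k}{2^r}2^{r+k-1}$ in either case. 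Adding the two contributions and rewriting $(2^r-2)^k-(-2)^k=(-1)^k\bigl((2-2^r)^k-2^k\bigr)$ yields the two stated formulas.

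I expect the main obstacle to be the careful bookkeeping of the boundary case $m=0$: this is precisely where the dependence on $c$ enters, through whether $c$ can be realized as $o\cdot 1_F$, where characteristic $2$ is essential so that the frozen sum depends only on the parity of $o$, and where Lemma \ref{sumofbinomial} gets invoked. The bulk term $m\ge 1$ is routine once the multinomial regrouping and the binomial theorem are in place; the entire subtlety lies in isolating $m=0$ and matching the parity of $o$ against the value of $c$.
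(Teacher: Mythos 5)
Your proof is correct, but it takes a genuinely different route from the paper. The paper's proof is essentially a citation: it invokes Theorem 1 of \cite{Mig18}, which for $F=GF(2^r)$ gives
$\varphi_{k}(F, c)=\frac{(-1)^k}{2^r} \bigl( 2^r \sum_{j:\, c=\eta(j)}\binom{k}{j} -2^k + (2-2^{r})^k \bigr)$ with $\eta(j)=j\cdot 1_F$, and then observes that in characteristic $2$ the condition $c=\eta(j)$ selects the even $j$ when $c=0$, the odd $j$ when $c=1$, and nothing otherwise, so that Lemma \ref{sumofbinomial} collapses the sum to $2^{k-1}$. Your inclusion--exclusion argument rederives Miguel's formula from scratch in the field case: your bulk term ($m\geq 1$) is exactly the $c$-independent part $\frac{(-1)^k}{2^r}\bigl((2-2^r)^k-2^k\bigr)$, your boundary term ($m=0$) is exactly $(-1)^k\sum_{j:\, c=\eta(j)}\binom{k}{j}$, and from there you invoke Lemma \ref{sumofbinomial} in precisely the same way the paper does. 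I checked the two nontrivial steps — the count $2^{r(m-1)}$ of free tuples with prescribed sum, and the regrouping $\sum_{|Z|+|O|=k-m}\frac{k!}{|Z|!\,|O|!\,m!}=\binom{k}{m}2^{k-m}$ followed by the binomial theorem — and both are sound. What your approach buys is self-containedness (no dependence on \cite{Mig18}) and transparency about where characteristic $2$ enters, namely only in reducing the frozen sum $|O|\cdot 1_F$ to a parity condition; what the paper's approach buys is brevity, and the cited result holds for arbitrary finite commutative rings rather than just fields.
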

\begin{proof}
   Theorem 1 from \cite{Mig18} shows that 
   $\varphi_{k}(F, c)=\frac{(-1)^k}{2^r} \left( 2^r \sum\limits_{\substack{j=0 \\ c=\eta(j)}}^{k}{k \choose j} -2^k + (2-2^{r})^k \right)$, where $\eta: \ZZ \rightarrow F$ is defined by $\eta(j)=j \cdot 1$. Therefore, if $c=0$ or $c=1$, we have by Lemma \ref{sumofbinomial} that $\varphi_{k}(F, c)=\frac{(-1)^k}{2^r} \left(2^{r+k-1} -2^k + (2-2^{r})^k \right)$. For all other $c \in F$, we have 
   $\varphi_{k}(F, c)=\frac{(-1)^k}{2^r} \left(-2^k + (2-2^{r})^k \right)$.
\end{proof}

Now, we have the following theorem, which is the main result of this section.

\begin{theorem}
\label{main}
     Let $k \geq 2$ be an integer and let $R$ be a finite commutative local ring with $2^{nr}$ elements for some integers $n, r$ such that $R/J(R)$ is a field with $2^r$ elements.
   Then for any $c=x_1+x_2i+x_3j+x_4k \in H(R)$ we have
   \begin{equation*}
   \begin{split}
      \varphi_{k}(H(R), c)=\begin{cases}
(-1)^k2^{(4nk-4n-k)r} \left(2^{r+k-1} -2^k + (2-2^{r})^k \right), \text { if } \\ \hspace{5cm} x_1+x_2+x_3+x_4 \in \{0,1\} \mod J(R), \\
(-1)^k2^{(4nk-4n-k)r} \left(-2^k + (2-2^{r})^k \right), \text { otherwise}.
\end{cases}
\end{split}
   \end{equation*}
\end{theorem}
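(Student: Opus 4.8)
The plan is to reduce the computation over the noncommutative ring $H(R)$ to a computation over its residue field $GF(2^r)$, combining the even-order structure theory of Lemma \ref{case2local} with the radical-reduction formula of Corollary \ref{Th2}, and then invoking the field count of Lemma \ref{sumsinfields}.

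First I would record the local structure. Since $R$ is local of order $2^{nr}$, its additive group is a $2$-group, so $2$ is nilpotent and in particular a zero-divisor; as $R$ is local this forces $2 \in J(R)$, so Lemma \ref{case2local} applies and shows that $H(R)$ is itself local with
\begin{equation*}
J(H(R)) = \{x_1 + x_2 i + x_3 j + x_4 k : x_1 + x_2 + x_3 + x_4 \in J(R)\}.
\end{equation*}
The next step is a counting argument. The assignment $(x_1,x_2,x_3,x_4) \mapsto x_1+x_2+x_3+x_4$ is a surjective group homomorphism $R^4 \to R$ with kernel of size $|R|^3$, and $J(H(R))$ is exactly the preimage of $J(R)$, so using $|J(R)| = |R|/|R/J(R)| = 2^{(n-1)r}$ we obtain
\begin{equation*}
|J(H(R))| = |R|^3\,|J(R)| = 2^{3nr}\cdot 2^{(n-1)r} = 2^{(4n-1)r}.
\end{equation*}

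Next I would identify the residue ring. Since $H(R)$ is local, $H(R)/J(H(R))$ is a division ring; it is finite of order $|H(R)|/|J(H(R))| = 2^{4nr}/2^{(4n-1)r} = 2^r$, so by Wedderburn's little theorem it is the field $GF(2^r)$. Applying Corollary \ref{Th2} to the finite ring $H(R)$ then gives
\begin{equation*}
\varphi_k(H(R), c) = |J(H(R))|^{k-1}\,\varphi_k\!\left(GF(2^r), \overline{c}\right) = 2^{(4n-1)r(k-1)}\,\varphi_k\!\left(GF(2^r), \overline{c}\right).
\end{equation*}
It remains to locate $\overline{c}$ relative to $\{0,1\}$ in $GF(2^r)$, and here the explicit description of $J(H(R))$ does the work: $\overline{c}=0$ precisely when $c \in J(H(R))$, i.e. when $x_1+x_2+x_3+x_4 \equiv 0 \pmod{J(R)}$; and since the identity $1 \in H(R)$ has coordinate sum $1$, we get $\overline{c}=1$ precisely when $c-1 \in J(H(R))$, i.e. when $x_1+x_2+x_3+x_4 \equiv 1 \pmod{J(R)}$. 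These are exactly the two alternatives separating the two cases of the statement.

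Finally I would substitute the field values from Lemma \ref{sumsinfields} and simplify the power of $2$: the factor $2^{-r}$ in that lemma combines with $2^{(4n-1)r(k-1)}$ to give $2^{(4n-1)(k-1)r - r} = 2^{(4nk-4n-k)r}$, which together with the $(-1)^k$ and the two bracketed expressions yields precisely the displayed formulas. I expect no serious obstacle, as everything follows by assembling the stated lemmas; the only points demanding care are the correct counting of $|J(H(R))|$, the exponent bookkeeping $(4n-1)(k-1)-1 = 4nk-4n-k$, and the faithful translation of the coordinate-sum condition into membership of $\overline{c}$ in $\{0,1\}$, which is where the radical description from Lemma \ref{case2local} is essential.
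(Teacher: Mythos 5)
Your proposal is correct and follows essentially the same route as the paper's own proof: Lemma \ref{case2local} to get locality of $H(R)$ and the description of $J(H(R))$, the count $|J(H(R))|=2^{(4n-1)r}$, reduction via Corollary \ref{Th2} to the residue field $GF(2^r)$, translation of the coordinate-sum condition into $\overline{c}\in\{0,1\}$, and finally Lemma \ref{sumsinfields} with the exponent simplification $(4n-1)(k-1)-1=4nk-4n-k$. Your added justifications (that $2$ is nilpotent hence a zero-divisor, and Wedderburn's little theorem for identifying the residue division ring as a field) are details the paper leaves implicit, but they do not change the argument.
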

\begin{proof}
    We know that $H(R)$ is a local ring by Lemma \ref{case2local}. Furthermore, $J(H(R))=\{x_1+x_2i+x_3j+x_4k;x_1+x_2+x_3+x_4 \in J(R)\}$, so $|J(H(R))|=|R|^3|J(R)|$. Since $|J(R)|=2^{(n-1)r}$, we have $|J(H(R))|=2^{3nr+(n-1)r}=2^{(4n-1)r}$.
    Note that $F=H(R)/J(H(R))$ is a finite field isomorphic to $GF(2^r)$.
    By Theorem \ref{Th2}, we then get
    $\varphi_{k}(H(R), c)=|J(H(R))|^{k-1}\varphi_{k}(F, \overline{c})=2^{(4n-1)(k-1)r}\varphi_{k}(F, \overline{c})$, where $\overline{c}=c+J(H(R))$. 
    Observe that $\overline{c}=0$ if and only if $c \in J(H(R))$, which is equivalent to the fact that $x_1+x_2 +x_3+x_4 \in J(R)$ by the above.
    Similarly, $\overline{c}=1$ if and only if $1+x_1+x_2 +x_3+x_4 \in J(R)$.
    Lemma \ref{sumsinfields} now proves the assertion.
\end{proof}

\bigskip

\section{\textbf{Rings of odd order}}

\noindent
In this section, we examine the case when $R$ is a finite commutative local ring of an odd order. It turns out that the situation in this case is somewhat more complicated. 

In view of Theorem \ref{Th1}, we have to investigate the problem in $M_2(F)$ for a finite field $F$ of an odd order.
We can immediately establish the following bound.

\begin{lemma}
\label{bound}
Let $ F $ be a finite field of an odd order $q$. If $q \geq 9$ then
$\varphi_{2}(M_{2}(F), C) \geq q^4 - 8 q^3$ for any $C \in M_2(F)$. 
\end{lemma}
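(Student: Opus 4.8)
The plan is to convert $\varphi_{2}(M_{2}(F), C)$ into a complementary count of matrices that avoid four singularity conditions, and then to control the exceptional set by a crude union bound. First I would fix $C \in M_2(F)$ and parametrize a representation $C = X + Y$ by the single variable $X$, setting $Y = C - X$. A pair $(X, Y)$ consists of two exceptional units exactly when $X$ and $Y=C-X$ are both units whose complements in $I$ are units, i.e. precisely when the four matrices $X$, $I - X$, $C - X$ and $I - (C - X) = X - (C - I)$ are all invertible. Hence $\varphi_{2}(M_{2}(F), C)$ equals the number of $X \in M_2(F)$ lying outside the union of the four singularity loci $S_A := \{X \in M_2(F) : \det(X - A) = 0\}$ taken over $A \in \{0,\, I,\, C,\, C - I\}$.

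Next I would count each $S_A$ separately. Since the translation $X \mapsto X - A$ is a bijection of $M_2(F)$, the set $S_A$ has the same cardinality as the set of singular $2 \times 2$ matrices over $F$, and this is independent of $A$. Using $|GL_2(F)| = (q^2 - 1)(q^2 - q)$, the number of singular matrices is $q^4 - (q^2 - 1)(q^2 - q) = q^3 + q^2 - q$, so $|S_A| = q^3 + q^2 - q$ for each of the four choices of $A$.

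A union bound then gives $|S_0 \cup S_I \cup S_C \cup S_{C-I}| \le 4(q^3 + q^2 - q)$, and therefore $\varphi_{2}(M_{2}(F), C) \ge q^4 - 4(q^3 + q^2 - q)$. Since $q^2 - q \le q^3$ for every $q \ge 1$, we have $4(q^3 + q^2 - q) \le 8q^3$, which yields the claimed inequality $\varphi_{2}(M_{2}(F), C) \ge q^4 - 8q^3$.

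There is essentially no obstacle in this argument: overlaps among the sets $S_A$ can only shrink their union, so they work in our favour for a lower bound, and neither the oddness of $q$ nor any spectral or centralizer data is needed for this estimate. The hypothesis $q \ge 9$ does not enter the inequality at all; its role is to make the right-hand side $q^4 - 8q^3 = q^3(q - 8)$ strictly positive, so that the bound actually certifies $\varphi_{2}(M_{2}(F), C) > 0$ for every $C$, i.e. that every matrix is a sum of two exceptional units. A sharper or exact count would instead require inclusion--exclusion over the four loci $S_A$ together with the spectral information recorded in the preliminaries, but that refinement is unnecessary for the present lemma.
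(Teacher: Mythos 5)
Your proof is correct, and it takes a genuinely different route from the paper. The paper's argument is an explicit parametrization with a spectral avoidance step: it writes a candidate first summand as $A+\lambda I$ with $A=\begin{pmatrix} x & y\\ z & 0\end{pmatrix}$, notes that $(A+\lambda I)+(C-A-\lambda I)=C$ is a sum of two exceptional units whenever $\lambda$ avoids the at most $8$ elements of $F$ lying in $(-\Spec(A))\cup(-\Spec(A)+1)\cup\Spec(C-A)\cup(\Spec(C-A)+1)$ (eigenvalues taken in an extension field), and that distinct $(x,y,z,\lambda)$ give distinct decompositions because the $(2,2)$ entry of $A+\lambda I$ is $\lambda$; this yields $\varphi_{2}(M_{2}(F),C)\geq q^{3}(q-8)$. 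You instead use the reduction to counting single matrices (which is exactly the paper's Lemma \ref{sumstomatrices}) followed by complementary counting: the valid $X$ are those avoiding the four translated singular loci $S_{A}$, $A\in\{0,I,C,C-I\}$, each of exact size $q^{4}-(q^{2}-1)(q^{2}-q)=q^{3}+q^{2}-q$, and a union bound finishes. Your route is more elementary (no eigenvalues, no field extensions, no injectivity-of-parametrization check) and in fact gives the slightly stronger bound $q^{4}-4q^{3}-4q^{2}+4q\geq q^{4}-8q^{3}$; you are also right that $q\geq 9$ serves only to make the bound nonvacuous, and the same is true of the paper's proof. What the paper's approach buys is not sharpness here but technique: the same spectral-shift device (perturbing summands by $\lambda I$ and excluding finitely many bad $\lambda$ via eigenvalues) is reused later, e.g. in Lemma \ref{scalartimesidem}, so the authors' choice keeps the paper's toolkit uniform.
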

\begin{proof}
    Choose $C \in M_2(F)$ and $x, y, z \in F$ and define $A=\begin{pmatrix}
x & y\\
z & 0
\end{pmatrix} \in M_{2}(F)$. Then $(A+\lambda I)+(C-A-\lambda I)=C$.
Denote $\Spec(A)=\{a_1,a_2\}$ and $\Spec(C-A)=\{b_1,b_2\}$. (Note that $\Spec(A)$ and $\Spec(C-A)$ are in general subsets of some field extension $K$ over $F$.) Denote $X=(-\Spec(A)) \cup (-\Spec(A)+1) \cup (\Spec(C-A)) \cup (\Spec(C-A)+1)$ and observe that $|X \cap F| \leq 8$. If $\lambda \notin X$, then $(A+\lambda I)+(C-A-\lambda I)=C$ is a sum of two exceptional units. Since $(A+\lambda I)_{22}=\lambda$, we see that we get  different sums for every $\lambda \in F \setminus X$ and every $x,y,z \in F$, so 
$\varphi_{2}(M_{2}(F), C) \geq q^3(q - 8)$.
\end{proof}

\begin{lemma}
\label{sumstomatrices}
 Let $ R $ be a ring and $ c \in R $. Then
\begin{equation*}
\varphi_{2}(R, c) = | \{(x_{1}, x_{2}) \in (R^{**})^{2} : x_{1} + x_{2} = c \}|=| \{ x \in R : x, x-1, x-c, x+1-c \in R^{*} \}|.
\end{equation*}
\end{lemma}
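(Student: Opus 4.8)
The plan is to exhibit an explicit bijection between the two sets whose cardinalities are being compared, namely $S_1 = \{(x_1,x_2) \in (R^{**})^2 : x_1 + x_2 = c\}$ and $S_2 = \{x \in R : x, x-1, x-c, x+1-c \in R^*\}$. Since a pair $(x_1,x_2)$ subject to $x_1+x_2=c$ is completely determined by its first coordinate, the natural candidate is the projection $(x_1,x_2) \mapsto x_1$, with the evident inverse $x \mapsto (x,\,c-x)$. Establishing that these two maps are well defined and mutually inverse will give the claimed equality of cardinalities.

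First I would unpack the definition of an exceptional unit: $u \in R^{**}$ means precisely that both $u$ and $1-u$ lie in $R^*$. Applying this to a pair $(x_1,x_2) \in S_1$, writing $x := x_1$ and $x_2 = c - x$, the condition $x_1 \in R^{**}$ becomes $x \in R^*$ and $1 - x \in R^*$, while the condition $x_2 \in R^{**}$ becomes $c - x \in R^*$ and $1 - (c-x) \in R^*$. The task is then to match these four requirements against the four defining conditions of $S_2$.

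The matching uses only that $-1 \in R^*$ (valid in any ring with identity, and with $-1$ central): indeed $1 - x \in R^*$ if and only if $x - 1 \in R^*$, and $c - x \in R^*$ if and only if $x - c \in R^*$, whereas $1 - (c-x) = x + 1 - c$ already. Hence $(x_1,x_2) \in S_1$ forces $x_1 \in S_2$, and conversely, for $x \in S_2$ the same equivalences show $x \in R^{**}$ and $c - x \in R^{**}$, so $(x, c-x) \in S_1$. I do not anticipate any genuine obstacle here, as the statement is essentially a change of variables for $k=2$; the only point meriting an explicit word is the passage between $1-x$ and $x-1$ (and between $c-x$ and $x-c$) via multiplication by the unit $-1$, since the lemma is stated for an arbitrary, possibly noncommutative, ring $R$.
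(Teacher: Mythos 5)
Your proposal is correct and matches the paper's approach: the paper's proof is just the one-line observation that $x_2 = c - x_1$ being an exceptional unit means $c-x_1$ and $1-(c-x_1)$ lie in $R^*$, which is exactly your bijection $(x_1,x_2)\mapsto x_1$ spelled out in full. Your extra care with the passage from $1-x$ to $x-1$ (and $c-x$ to $x-c$) via the central unit $-1$ is a reasonable detail to make explicit, since the ring is not assumed commutative.
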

\begin{proof}
    Observe that $x_2=c-x_1$ is an exceptional unit, so $c-x_1$ and $1-(c-x_1)$ are in $R^*$.
\end{proof}

In the following lemmas, we will now try to calculate the number of decompositions into sums of two exceptional units for different types of two by two matrices.

\begin{lemma}
\label{phi2ofzero}
Let $ F $ be a finite field of an odd order $q$. Then
\begin{equation*}
\varphi_{2}(M_{2}(F), 0)= q^4 - 3 q^3 + 6 q.
\end{equation*}
\end{lemma}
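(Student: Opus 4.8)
The plan is to apply Lemma \ref{sumstomatrices} with $c=0$, which reduces the count to
\begin{equation*}
\varphi_{2}(M_{2}(F),0)=|\{X \in M_2(F): X,\,X-I,\,X+I \in GL_2(F)\}|,
\end{equation*}
since for $c=0$ the four conditions $x,x-1,x-c,x+1-c \in R^*$ collapse to $X,X-I,X+I$ being invertible. The key reformulation is spectral: a matrix $X\in M_2(F)$ fails to lie in this set exactly when $\det X=0$, or $\det(X-I)=0$, or $\det(X+I)=0$, i.e.\ when one of $0,1,-1$ belongs to $\Spec(X)$. So I would set $A_\mu=\{X\in M_2(F):\mu\in\Spec(X)\}$ for $\mu\in\{0,1,-1\}$ and compute the desired quantity as $q^4-|A_0\cup A_1\cup A_{-1}|$ via inclusion--exclusion.

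Next I would evaluate the individual terms. First, translation $X\mapsto X-\mu I$ is a bijection of $M_2(F)$ carrying $A_\mu$ to $A_0$, so $|A_0|=|A_1|=|A_{-1}|$, and $|A_0|$ is just the number of singular $2\times2$ matrices, namely $q^4-|GL_2(F)|=q^4-(q^2-1)(q^2-q)=q^3+q^2-q$. For the pairwise intersections I would use that $0,1,-1$ are pairwise distinct (this is where the hypothesis that $q$ is odd enters). A $2\times2$ matrix lying in $A_\mu\cap A_\nu$ with $\mu\neq\nu$ has two distinct eigenvalues $\mu,\nu\in F$, hence is diagonalizable and conjugate to $\operatorname{diag}(\mu,\nu)$; its centralizer $C_{GL_2(F)}(\operatorname{diag}(\mu,\nu))$ consists exactly of the invertible diagonal matrices, of order $(q-1)^2$. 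Therefore each pairwise intersection has size $|GL_2(F)|/(q-1)^2=q(q+1)(q-1)^2/(q-1)^2=q^2+q$.

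Finally, the triple intersection $A_0\cap A_1\cap A_{-1}$ is empty, since a $2\times2$ matrix cannot have three distinct eigenvalues. Assembling these gives
\begin{equation*}
|A_0\cup A_1\cup A_{-1}|=3(q^3+q^2-q)-3(q^2+q)+0=3q^3-6q,
\end{equation*}
and hence $\varphi_{2}(M_{2}(F),0)=q^4-(3q^3-6q)=q^4-3q^3+6q$, as claimed. The main obstacle is the pairwise count: one must recognize that having two distinct prescribed eigenvalues forces diagonalizability and pin down the conjugacy-class size through the centralizer computation, and one must invoke oddness of $q$ to guarantee $0,1,-1$ are distinct (so that the characteristic polynomial splits with simple roots and the triple intersection is genuinely empty).
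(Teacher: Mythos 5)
Your proof is correct, and it takes a genuinely different route from the paper. The paper counts the ``good'' matrices directly: it enumerates rational canonical forms (non-derogatory companion matrices $\begin{pmatrix} 0 & b\\ 1 & d \end{pmatrix}$ and scalar matrices), and for each admissible form computes the order of its centralizer in $GL_2(F)$ to get the conjugacy class size, which forces a three-way case analysis on whether the discriminant $d^2+4b$ is a nonzero square, zero, or a nonsquare in $F$. Your argument instead counts the complement by inclusion--exclusion over the three sets $A_\mu=\{X:\det(X-\mu I)=0\}$, $\mu\in\{0,1,-1\}$: the single sets are handled by the translation bijection and the count of singular matrices, the pairwise intersections are single conjugacy classes of $\operatorname{diag}(\mu,\nu)$ (your centralizer computation $(q-1)^2$ and class size $q^2+q$ are correct, and diagonalizability does follow since the characteristic polynomial then has two distinct roots in $F$), and the triple intersection is empty. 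Both proofs use oddness of $q$ only to ensure $0,1,-1$ are pairwise distinct, and both arrive at $q^4-3q^3+6q$. Your route is shorter and cleaner: it avoids the quadratic-residue bookkeeping entirely, and it generalizes immediately to counting matrices avoiding any prescribed finite set of scalars as eigenvalues --- which is exactly the spirit of the Morrison result the paper invokes later for $\varphi_2(M_2(F),I)$. What the paper's heavier canonical-form machinery buys is reusability of the same centralizer analysis in its subsequent case-by-case lemmas, but for this particular lemma your argument is the more economical one.
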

\begin{proof}
By Lemma \ref{sumstomatrices} we need to count the number of matrices $A$ in $M_{2}(F) $ such that $ A $, $ A+I $ and $ A-I $ are invertible.

Recall that there exists an invertible matrix $ B \in M_{2}(F) $ such that $ A=PBP^{-1} $ for some $ P \in M_{2}(F) $, where either $ B=\begin{pmatrix}
0 & b\\
1 & d\\
\end{pmatrix} $
($ b $, $ d+1-b  $, and $ d-1+b $ are nonzero), or 
$ B=\begin{pmatrix}
a & 0\\
0 & a\\
\end{pmatrix} $
($ a, a+1 $ and $ a-1$ are nonzero). 
In the first case, we have $ q^{2}-3q+3 $ matrices, and in the second case, we have 
$ q-3 $
matrices. Noting that $ P_{1}BP_{1}^{-1}=P_{2}BP_{2}^{-1} $ if and only if $ P_{2}^{-1}P_{1} \in C_{GL_{2}(F)}(B) $, we need to compute $ |C_{GL_{2}(F)}(B)| $, since then the number of all matrices with the rational form equal to $B$ is equal to $\frac{|GL_{2}(F)|}{|C_{GL_{2}(F)}(B)|}$.
Obviously in the second case, $B$ is central, so $ C_{GL_{2}(F)}(B)= GL_{2}(F)$ and there exist exactly $q-3$ matrices $A$ with its rational form being a scalar matrix and $A, A+I, A-I$ being invertible.

Let us therefore examine the first case. Let $ C=\begin{pmatrix}
x & y\\
z & w\\
\end{pmatrix} $ be in $ C_{GL_{2}(F)}(B) $. Then $ C=\begin{pmatrix}
x & bz\\
z & x+dz\\
\end{pmatrix} $, where $ x^{2}+xdz-bz^{2} $ is nonzero. 
So, let us examine when $x^{2}+xdz-bz^{2}=0$. Obviously, if $x=0$ then $z=0$ and vice versa. So, assume that $z \neq 0$ and denote $t=xz^{-1}$. Then 
\begin{equation}
    \label{eq:1}
    t^2+dt-b=0.
\end{equation} Consider the mapping $f: F \rightarrow F$, defined by $f(x)=x^2$. If $f(x)=f(y)$ then $x=y$ or $x=-y$, so the fact that $F$ is of an odd order now yields $|\im(f)|=\frac{q-1}{2}+1=\frac{q+1}{2}$. Now, consider the following three cases. 

{\bf Case 1}: Suppose that $d^2+4b \notin \im(f)$. Then Equation (\ref{eq:1}) has no solutions, so we get $|C_{GL_{2}(F)}(B)| = q^2-1$ in this instance. Using the well known fact that $|GL_{2}(F)|=(q^2-1)(q^2-q)$, this implies that we have $\frac{|GL_{2}(F)|}{q^2 -1}=q^2-q$ different matrices for each choice of $b$ and $d$ such that $d^2+4b \notin \im(f)$.
So, how many choices of $b$ and $d$ are such that $d^2+4b \notin \im(f)$? We can choose any $d \in F$ (we have $q$ possibilities). Now consider the mapping $g_d: F \rightarrow F$ defined by $g_d(b)=d^2+4b$. Since $g_d$ is injective and $|\im(f)|=\frac{q+1}{2}$, exactly $q-\frac{q+1}{2}=\frac{q-1}{2}$ choices of $b$ will give $g_d(b) \notin \im(f)$. However, we also have further constraints that $b \neq 0$, $b \neq d+1$ and $b \neq 1-d$. However, $b=0$ gives us $g_d(b)=d^2 \in \im(f)$. Similarly, $b=d+1$ yields $g_d(b)=(d+2)^2 \in \im(f)$ and $b=1-d$ yields $g_d(b)=(d-2)^2 \in \im(f)$, so these three possibilities do not appear here. This implies that we have $q\frac{q-1}{2}$ choices for $b$ and $d$ such that $d^2+4b \notin \im(f)$. This yields altogether $(q^3-q^2)(\frac{q-1}{2})$ matrices.

{\bf Case 2}: Suppose that $d^2+4b=0$. Then Equation (\ref{eq:1}) has a solution $t=-2^{-1}d$. Note that $b \neq 0$ implies $d \neq 0$, so for any of the $q$ choices for $x$, we have $q-1$ choices for $z$, therefore $|C_{GL_{2}(F)}(B)| = q^2-q$, yielding $\frac{|GL_{2}(F)|}{q^2 -q}=q^2-1$ matrices. Observe that $b=d+1$ implies that $d^2+4b=(d+2)^2=0$, so $d=-2$ and $b=-1$. Similarly, $b=1-d$ implies that $d^2+4b=(d-2)^2=0$, so $d=2$ and $b=-1$. Note also that $2 \neq -2$, since $F$ is a of odd order. Therefore, $d \in F \setminus \{-2,0,2\}$ can be chosen arbitrarily and then $b=-4^{-1}d^2$, giving us $q-3$ possibilities for choosing $b$ and $d$. In this case, we thus have $(q-3)(q^2-1)$ matrices.

{\bf Case 3}: Suppose finally that $0 \neq d^2+4b \in \im(f)$, say $d^2+4b=\alpha^2$ for some $\alpha \in F^*$. Then Equation (\ref{eq:1}) has a solution $t=2^{-1}(-d \pm \alpha)$, so for any choice of non-zero $x$, we have $q-2$ choices for $z$, giving us $(q-1)+(q-1)(q-2)=(q-1)^2$ elements of $C_{GL_{2}(F)}(B)$, thus yielding $q(q+1)$ matrices in this case.  Similar reasoning as in Case 1 shows that for any $d \in F$ exactly $\frac{q-1}{2}$ choices of $b$ will give $g_d(b)=d^2+4b \in \im(f) \setminus \{0\}$. 
We also have three additional constraints: $b \neq 0, b \neq d+1$ and $b \neq 1-d$.

Suppose firstly that $\charac{F} \neq 3$.
If $d=0$ therefore $b \neq 1$ ($b \neq 0$ is automatically true since $g_d(b) \neq 0$). If $d=\pm 1$ we have $b \neq 0,2$. If $d= \pm 2$ then $b \neq 0,3$ (since $b=-1$ gives $g_d(b)=0$ so this case does not appear here).  For all other choices of $d$, the elements $d+1, 1-d$ and $0$ are distinct (and $g_d(b) \in \im(f) \setminus \{0\}$ when $b$ equals any of them), so we have $\frac{q-1}{2}-3$ choices for $b$. This means that have exactly $(\frac{q-1}{2}-1)+4(\frac{q-1}{2}-2)+(q-5)(\frac{q-1}{2}-3)=\frac{q^2- 7q + 12}{2}$ choices for $d$ and $b$.

If $\charac{F}=3$, then we have the following constraints: if $d=0$ we have $b \neq 1$ ($b \neq 0$ is automatically true since $g_d(b) \neq 0$). If $d=\pm 1$ we have $b \neq 0$ ($b=2$ gives $g_d(b)=0$, so it does not appear here). For all other choices of $d$, the elements $d+1, 1-d$ and $0$ are distinct (and $g_d(b) \in \im(f) \setminus \{0\}$ when $b$ equals any of them), so we have $\frac{q-1}{2}-3$ choices for $b$. This means that have exactly $3(\frac{q-1}{2}-1)+(q-3)(\frac{q-1}{2}-3)=\frac{q^2- 7q + 12}{2}$ choices for $d$ and $b$.

Thus, in Case 3, we have $\frac{q^2-7q+12}{2}q(q+1)$ matrices.


So, if we sum it all together, we have $(q-3)+(q^3-q^2)(\frac{q-1}{2})+(q-3)(q^2-1)+\frac{q^2-7q+12}{2}q(q+1)=q^4 - 3 q^3 + 6 q$
matrices $A \in M_2(F)$ such that $A, A-I$ and $A+I$ are invertible and thus the result is proven.

\end{proof}


\begin{lemma}
\label{phi2ofinvertible}
Let $ F $ be a finite field of an odd order $q$ and let $C \in M_2(F)$ be an invertible matrix. Then
\begin{equation*}
\varphi_{2}(M_{2}(F), C) \leq q^4 - 2 q^3 - q^2  + 3 q.
\end{equation*}
Furthermore, if $C=1$, the above inequality becomes an equality.
\end{lemma}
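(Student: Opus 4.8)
The plan is to start from Lemma~\ref{sumstomatrices}, which for $k=2$ rewrites the quantity of interest as a count of matrices with four simultaneous invertibility conditions:
\begin{equation*}
\varphi_{2}(M_{2}(F), C) = \left| \{A \in M_2(F) : A,\ A-I,\ A-C,\ A+I-C \text{ are all invertible}\} \right|.
\end{equation*}
The crucial structural observation is that the first two conditions, namely that $A$ and $A-I$ be invertible, do not involve $C$ in any way; they are present for every target matrix $C$. Hence the set being counted is always contained in $\{A : A \text{ and } A-I \text{ are invertible}\}$, which immediately yields the upper bound
\begin{equation*}
\varphi_{2}(M_{2}(F), C) \leq \left| \{A \in M_2(F) : A \text{ and } A-I \text{ are invertible}\} \right|.
\end{equation*}

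Next I would dispose of the equality case $C=I$ before computing anything. When $C=I$, the third condition $A-C=A-I$ collapses onto the second, and the fourth condition $A+I-C=A$ collapses onto the first, so all four constraints reduce to exactly the two constraints ``$A$ invertible'' and ``$A-I$ invertible.'' Therefore $\varphi_{2}(M_{2}(F), I)$ equals precisely the right-hand side of the displayed bound, and it remains only to evaluate the single count
\begin{equation*}
N := \left| \{A \in M_2(F) : A \text{ and } A-I \text{ are invertible}\} \right|
\end{equation*}
to settle both assertions of the lemma at once.

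To compute $N$ I would apply inclusion--exclusion to the two ``bad'' events that $A$ is singular and that $A-I$ is singular. The number of singular matrices in $M_2(F)$ is $q^4 - |GL_2(F)| = q^4 - (q^2-1)(q^2-q) = q^3+q^2-q$, and the translation $A \mapsto A-I$ shows that the same number of $A$ make $A-I$ singular. The only non-routine term is the number of $A$ that are \emph{simultaneously} singular with $A-I$ singular: for such $A$ one has $\det A = 0$ and $\det(A-I)=0$, which forces the characteristic polynomial to be $t(t-1)$ (trace $1$, determinant $0$). Since $0 \neq 1$ in $F$, these two eigenvalues are distinct, so every such $A$ is diagonalizable and conjugate to $\operatorname{diag}(0,1)$; the orbit--stabilizer relation $|GL_2(F)|/|C_{GL_2(F)}(\operatorname{diag}(0,1))|$, with the centralizer being the diagonal torus of order $(q-1)^2$, gives $(q^2-1)(q^2-q)/(q-1)^2 = q^2+q$ such matrices. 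Inclusion--exclusion then yields
\begin{equation*}
N = q^4 - 2(q^3+q^2-q) + (q^2+q) = q^4 - 2q^3 - q^2 + 3q,
\end{equation*}
which proves the inequality for all invertible $C$ and the equality for $C=I$.

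I expect no serious obstacle here: the entire argument rests on the one observation that two of the four invertibility constraints are free of $C$, which makes the bound sharp precisely at $C=I$. The only point that demands a little care is the orbit count for matrices with eigenvalues $0$ and $1$, where one must invoke the distinctness of the eigenvalues (valid because $F$ has odd, hence $\neq 2$, order) to guarantee diagonalizability, so that all such matrices form a single conjugacy class with diagonal centralizer.
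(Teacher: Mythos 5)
Your proof is correct, but it takes a genuinely different route from the paper's in both halves of the argument. For the inequality, the paper works multiplicatively: given a representation $C=A+B$ by exceptional units, it right-multiplies by $C^{-1}$ and checks that $AC^{-1}$ and $BC^{-1}$ are exceptional units summing to $I$, so that $\varphi_{2}(M_{2}(F),C)\leq\varphi_{2}(M_{2}(F),I)$; this is exactly where invertibility of $C$ is used. You instead just drop the two $C$-dependent conditions in the four-condition characterization from Lemma \ref{sumstomatrices}, which is more elementary and in fact proves the bound for \emph{every} $C\in M_2(F)$, invertible or not, so your argument is slightly more general than the statement. For the evaluation of $N=\left|\{A: A,\, A-I \in GL_2(F)\}\right|$, the paper simply cites \cite[Corollary 2.1]{morrison}, whereas you compute it self-containedly by inclusion--exclusion, handling the joint-singularity term via the orbit--stabilizer count for the conjugacy class of $\operatorname{diag}(0,1)$; your arithmetic is right, $q^4-2(q^3+q^2-q)+(q^2+q)=q^4-2q^3-q^2+3q$, and this makes the lemma independent of the external reference. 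One cosmetic point: your closing remark that distinctness of the eigenvalues $0$ and $1$ ``is valid because $F$ has odd order'' is misattributed --- $0\neq 1$ in every field, so that step (and your whole count) is characteristic-free; the odd-order hypothesis plays no role anywhere in your argument.
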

\begin{proof}
Note that $A+B=C$ for exceptional units $A$ and $B$ implies that $AC^{-1}+BC^{-1}=1$. Obviously, $AC^{-1}$ and $BC^{-1}$ are invertible matrices. Since $B=A-C$ is invertible, we see that $(C-A)C^{-1}=1-AC^{-1}$ is invertible as well, so $AC^{-1}$ is an exceptional unit, and of course we can reason similarly for $BC^{-1}$. Since the mapping $X \mapsto XC^{-1}$ is a bijection on the set of all invertible matrices, we conclude that $\varphi_{2}(M_{2}(F), C) \leq \varphi_{2}(M_{2}(F), 1)$.
By Lemma \ref{sumstomatrices}, we know that to find $\varphi_{2}(M_{2}(F), 1)$ we need to count the number of matrices $A$ in $M_{2}(F) $ such that $ A $ and $ A-I $ are invertible, i.e. matrices without $0$ and $1$ as their eigenvalues. The result now follows from \cite[Corollary 2.1]{morrison}.
\end{proof}

\begin{lemma}
\label{phi2ofidempotent}
Let $ F $ be a finite field of an odd order $q$ and let $C \in M_2(F)$ be an idempotent matrix of rank one. Then
\begin{equation*}
\varphi_{2}(M_{2}(F), C)= q^4 - 4 q^3 + 5 q^2 - 4 q + 4.
\end{equation*}
\end{lemma}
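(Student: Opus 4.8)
The plan is to first reduce to a convenient normal form and then translate the four invertibility conditions of Lemma \ref{sumstomatrices} into purely linear constraints. I would begin by observing that $\varphi_2(M_2(F),\cdot)$ is invariant under conjugation: if $C=PEP^{-1}$ and I set $A'=P^{-1}AP$, then each of $A,\ A-I,\ A-C,\ A+I-C$ is invertible if and only if the corresponding $A',\ A'-I,\ A'-E,\ A'+I-E$ is (conjugation preserves invertibility and $A\mapsto A'$ is a bijection of $M_2(F)$). Since a rank-one idempotent has minimal polynomial $x(x-1)$, it is diagonalizable with $\Spec(C)=\{0,1\}$, hence conjugate to $E=\begin{pmatrix}1&0\\0&0\end{pmatrix}$. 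Thus it suffices to compute $\varphi_2(M_2(F),E)$.

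By Lemma \ref{sumstomatrices} this is the number of $A=\begin{pmatrix}a&b\\c&d\end{pmatrix}$ for which $A$, $A-I$, $A-E$ and $A+I-E$ are all invertible. The decisive simplification is to record everything through $u=\det A=ad-bc$: a direct determinant computation gives $\det(A-I)=u-(a+d-1)$, $\det(A-E)=u-d$ and $\det(A+I-E)=u-(-a)$. Hence the four conditions are exactly $u\notin\mathcal{F}(a,d)$, where $\mathcal{F}(a,d)=\{0,\ a+d-1,\ d,\ -a\}$. Crucially the forbidden set depends only on $a$ and $d$, and $(b,c)$ enters only through the value of $bc$.

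Next I would fix $(a,d)$ and count admissible $(b,c)$. The map $(b,c)\mapsto bc$ has a fiber of size $2q-1$ over $0$ and of size $q-1$ over each nonzero value, and $u=ad-bc$ equals $ad$ precisely when $bc=0$. Writing the number of $(b,c)$ giving a prescribed $u$ as $(q-1)+q\,[u=ad]$, the number of admissible pairs is $q^2-(q-1)\,|\mathcal{F}(a,d)|-q\,[\,ad\in\mathcal{F}(a,d)\,]$. Summing over all $(a,d)\in F^2$ then reduces the entire problem to two elementary counts, $S_1=\sum_{(a,d)}|\mathcal{F}(a,d)|$ and $S_2=|\{(a,d):ad\in\mathcal{F}(a,d)\}|$, after which $\varphi_2(M_2(F),E)=q^4-(q-1)S_1-q\,S_2$.

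The remaining work is where the bookkeeping lives, and I expect the collision pattern among the forbidden values to be the main thing to watch; however, it is far tamer than in Lemma \ref{phi2ofzero}. One checks that $ad\in\mathcal{F}(a,d)$ holds exactly when $a\in\{0,1\}$ or $d\in\{-1,0,1\}$, giving $S_2=5q-6$; and $S_1$ is obtained by summing over $v\in F$ the number of $(a,d)$ with $v\in\mathcal{F}(a,d)$, i.e. by inclusion–exclusion over the three lines $a+d=v+1$, $d=v$, $a=-v$ (which are concurrent precisely when $v=-1$), giving $S_1=4q^2-6q+4$. Substituting yields $q^4-4q^3+5q^2-4q+4$, as claimed. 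Note that since all constraints remain linear, no square-counting appears and the only role of oddness of $q$ is to guarantee that $\{-1,0,1\}$ has three elements, so in contrast to Lemma \ref{phi2ofzero} no separate $\charac F=3$ analysis is needed.
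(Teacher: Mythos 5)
Your proof is correct, and it takes a genuinely different route through the counting than the paper does. The setup is the same: the paper also conjugates $C$ to $\begin{pmatrix}1&0\\0&0\end{pmatrix}$ and reduces, via Lemma \ref{sumstomatrices}, to four determinant conditions, which it phrases as $t=bc\notin\{ad,\,(1-a)(1-d),\,(a-1)d,\,a(1+d)\}$ (an equivalent formulation of your $u=\det A\notin\mathcal{F}(a,d)$, since $u=ad-t$). But from there the paper runs an explicit thirteen-case analysis on the values of $(a,d)$, in each case determining how many distinct forbidden values there are, whether $0$ is among them, and hence how many pairs $(b,c)$ survive, and finally sums all thirteen contributions (having to verify the cases are mutually exclusive and exhaustive). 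Your reparametrization by the determinant makes the forbidden set $\mathcal{F}(a,d)=\{0,\,a+d-1,\,d,\,-a\}$ affine-linear in $(a,d)$, which lets you interchange the order of summation: everything reduces to the two global counts $S_1=\sum_{(a,d)}\left|\mathcal{F}(a,d)\right|$, computed by summing over $v\in F$ with inclusion--exclusion on three pairwise non-parallel lines (concurrent exactly when $v=-1$), and $S_2=\left|\{(a,d):ad\in\mathcal{F}(a,d)\}\right|$, a union of five lines. I verified the key claims: $ad\in\mathcal{F}(a,d)$ holds iff $a\in\{0,1\}$ or $d\in\{-1,0,1\}$, the fiber count of $(b,c)\mapsto u$ is $(q-1)+q[u=ad]$, $S_1=4q^2-6q+4$, $S_2=5q-6$, and indeed $q^4-(q-1)S_1-qS_2=q^4-4q^3+5q^2-4q+4$. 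What your organization buys is a collapse of the paper's lengthy case bookkeeping into two short, transparent computations (and it isolates exactly where oddness of $q$ enters, namely $-1\neq 1$); what the paper's buys is that each individual case is entirely mechanical, with no double-counting or inclusion--exclusion step to justify.
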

\begin{proof}
If $P \in M_2(F)$ is invertible then $A \in M_2(F)$ is an exceptional unit if and only if $PAP^{-1}$ is an exceptional unit. Therefore, $\varphi_{2}(M_{2}(F), C)=\varphi_{2}(M_{2}(F), PCP^{-1})$, so we can assume that
$ C=\begin{pmatrix}
1 & 0\\
0 & 0\\
\end{pmatrix}$.
Let $ A=\begin{pmatrix}
a & b\\
c & d\\
\end{pmatrix}$. Then $C=A+(C-A)$ is a sum of two exceptional units if and only if the following conditions hold:
\begin{equation}
 \label{conditions}
 \begin{split}
  ad-bc \neq 0,  \, \,
  (1-a)(1-d)-bc \neq 0,  \\
  (1-a)d+bc \neq 0 \, \text { and } \,
  a(1+d) - bc \neq 0.
  \end{split}
\end{equation}
Denote $t=bc \in F$. So conditions (\ref{conditions}) are equivalent to the fact that
\[t \notin \{ad, (1-a)(1-d), (a-1)d, a(1+d)\}.\]
Now, consider the following cases:
\begin{enumerate}
    \item 
    $a=0; d=0$ or $a=1; d = 0$: then $t \notin \{0, 1\}$, so $b$ can be any nonzero element in $F$ and $c$ can be any nonzero element in $F$ apart from $tb^{-1}$, which gives us $2(q-1)(q-2)$ choices for $A$;

    \item 
    $a=0; d=1$ or $a=1; d = -1$: then $t \notin \{0, -1\}$, so again $b$ can be any nonzero element in $F$ and $c$ can be any nonzero element in $F$ apart from $-tb^{-1}$, which gives us another $2(q-1)(q-2)$ matrices $A$;

    \item 
    $a=0; d \neq 0,1$: then $t \notin \{0, -d, 1-d\}$, so $b$ can be any nonzero element in $F$ and $c$ can be any nonzero element in $F$ apart from $-db^{-1}$ and $-(1+d)b^{-1}$, which gives us $(q-1)(q-2)(q-3)$ choices for $A$;

    \item 
    $a=1; d \neq 0, -1$: then $t \notin \{0, d, 1+d\}$, so $b$ can be any nonzero element in $F$ and we have $q-3$ choices for $c \in F$, which gives us $(q-1)(q-2)(q-3)$ choices for $A$;

    \item 
    $d=0; a = 2^{-1}$: then $1-a=a$, so $t \notin \{0, a\}$, yielding $(q-1)(q-2)$ choices for $A$;

    \item 
    $d=0; a \neq 0, 1, 2^{-1}$: then $1-a \neq a$, so $t \notin \{0, a, 1-a\}$, therefore we have $q-3$ choices for $a$, yielding together $(q-1)(q-3)^2$ choices for $A$;

    \item 
    $d = 1; a =-1$ or $d=-1; a=2$: then $t \notin \{0, -1, -2\}$, therefore we have $2(q-1)(q-3)$ choices for $A$;

   \item 
    $d \neq 0, 1, -1; a = -d$: then $t \notin \{-d^2, 1-d^2, -d^2-d\}$, and note that this set does not contain $0$. Therefore, we have $2q-1$ choices for $b$ and $c$ such that $t=0$ and $q-1$ choices for $b$ and $q-4$ choices for $c$ such that $t \neq 0$. Since we have $q-3$ choices for $d$, these together give us $(q-3)(2q-1+(q-1)(q-4))$ choices for $A$;

   \item 
    $d \neq 0, 1, -1; a = 1-d$: then $t \notin \{-d^2, 1-d^2, d-d^2\}$. Again, this set does not contain $0$, so we have $q-3$ choices for $d$ and  $2q-1+(q-1)(q-4)$ choices for $b$ and $c$, yielding together $(q-3)(2q-1+(q-1)(q-4))$ choices for $A$;

   \item 
    $d \neq 0, 1, -1; 2a = 1-d$: then $t \notin \{a-2a^2, 2a-2a^2, -1+3a-2a^2 \}$. Since $2a^2-3a+1=(a-1)(2a-1)$, we see that this set does not contain zero. Therefore we have $q-3$ choices for $d$, and $2q-1 + (q-1)(q-4)$ choices for $b$ and $c$, yielding together $(q-3)(2q-1+(q-1)(q-4))$ choices for $A$;

    \item
    $d=1; a \neq 0, 1, -1$: then $t \notin \{0, a, a-1, 2a \}$ therefore we have $q-3$ choices for $a$, $q-1$ choices for $b$ and $q-4$ choices for $c$, yielding together $(q-1)(q-3)(q-4)$ choices for $A$;

    \item
    $d=-1; a \neq 0, 1, 2$: then $t \notin \{0, -a, 1-a, 2-2a \}$ therefore we have $q-3$ choices for $a$, $q-1$ choices for $b$ and $q-4$ choices for $c$, yielding together $(q-1)(q-3)(q-4)$ choices for $A$; and finally

    \item
    $d \neq 0,1,-1; a \neq 0, 1, 1-d,-d,2^{-1}(1-d)$: then $t \notin \{ad, (1-a)(1-d), (a-1)d, a(1+d)\}$ and this set has exactly $4$ nonzero elements. Therefore, we have $q-3$ choices for $d$, $q-5$ choices for $a$, $2q-1+(q-1)(q-5)$ choices for $b$ and $c$, yielding together $(q-3)(q-5)(2q-1+(q-1)(q-5))$ choices for $A$.
\end{enumerate}
One can easily verify that all cases are mutually exclusive and all together they cover every possibility. This gives us $q^4 - 4 q^3 + 5 q^2 - 4 q + 4$ different matrices $A$, thus $q^4 - 4 q^3 + 5 q^2 - 4 q + 4$ different sums of two exceptional units and the lemma is proven.
\end{proof}


\begin{lemma}
\label{scalartimesidem}
Let $F$ be a field of order $q \geq 9$ and $\lambda \in F^*$. Then 
$$\frac{q-8}{q}\varphi_{2}(M_{2}(F), C) \leq \varphi_{2}(M_{2}(F), \lambda C) \leq \frac{q}{q-8}\varphi_{2}(M_{2}(F), C).$$
\end{lemma}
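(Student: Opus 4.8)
The plan is to deduce the statement directly from Lemma \ref{bound} together with a trivial upper bound, exploiting the fact that the scalar $\lambda$ plays no essential role here: $\lambda C$ is simply another element of $M_2(F)$, and Lemma \ref{bound} provides a lower bound that is uniform over \emph{all} matrices.

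First I would rewrite the lower bound of Lemma \ref{bound} in a suggestive form. For any matrix $D \in M_2(F)$, since $q \geq 9$, that lemma gives $\varphi_2(M_2(F), D) \geq q^3(q-8) = q^4 \cdot \frac{q-8}{q}$. On the other hand, by Lemma \ref{sumstomatrices} the quantity $\varphi_2(M_2(F), D)$ counts a certain subset of $M_2(F)$, so it is trivially bounded above by $|M_2(F)| = q^4$. Setting $m = q^3(q-8) = q^4 \cdot \frac{q-8}{q}$ and $M = q^4$, we conclude that every value $\varphi_2(M_2(F), D)$ lies in the interval $[m, M]$, and that $\frac{m}{M} = \frac{q-8}{q}$ while $\frac{M}{m} = \frac{q}{q-8}$.

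Next I apply this to the two matrices $D = C$ and $D = \lambda C$, both of which lie in $[m, M]$. Since $q \geq 9$ guarantees $q - 8 > 0$, both $m$ and $M$ are positive. Using the lower bound for $\lambda C$ and the upper bound for $C$ yields $\varphi_2(M_2(F), \lambda C) \geq m = \frac{q-8}{q} M \geq \frac{q-8}{q}\,\varphi_2(M_2(F), C)$; symmetrically, using the lower bound for $C$ and the upper bound for $\lambda C$ yields $\varphi_2(M_2(F), \lambda C) \leq M = \frac{q}{q-8}\, m \leq \frac{q}{q-8}\,\varphi_2(M_2(F), C)$. These are precisely the two claimed inequalities.

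There is no genuine obstacle here; the only thing to get right is the bookkeeping identity $q^3(q-8) = q^4 \cdot \frac{q-8}{q}$, which is exactly what makes the uniform lower bound of Lemma \ref{bound} and the crude upper bound $q^4$ combine to produce the ratio $\frac{q}{q-8}$. I would also record that we are working in the odd-order setting of this section, so that Lemma \ref{bound} genuinely applies to $F$, and that the hypothesis $q \geq 9$ is needed precisely to keep $q-8$ positive so that the bounding interval is nondegenerate.
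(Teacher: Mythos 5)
Your proof is correct, but it is a genuinely different argument from the paper's. The paper proves the lemma \emph{structurally}: from each decomposition $C=A+B$ into exceptional units it manufactures decompositions $\lambda C=(\lambda A-\mu I)+(\lambda B+\mu I)$, uses the spectra of $A$ and $B$ to show that at least $q-8$ values of $\mu$ work, accounts for the overcounting (two decompositions of $C$ can yield the same decomposition of $\lambda C$ only when they differ by a scalar shift $\tau I$, so each decomposition of $\lambda C$ is hit at most $q$ times), and then gets the reverse inequality by symmetry, replacing $\lambda$ with $\lambda^{-1}$. Your argument instead sandwiches \emph{every} value of $\varphi_2$ in the interval $[q^3(q-8),\,q^4]$ — the lower endpoint from Lemma \ref{bound}, the upper from the trivial count via Lemma \ref{sumstomatrices} — and observes that the ratio of the endpoints is exactly $\frac{q}{q-8}$. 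This is shorter and actually proves a stronger statement: the two inequalities hold with $\lambda C$ replaced by an \emph{arbitrary} matrix $D\in M_2(F)$, so the scalar relationship between $C$ and $\lambda C$ plays no role at all (in particular, Corollary \ref{phi2ofrankone} would also follow directly from your observation). What the paper's approach buys in exchange is independence from Lemma \ref{bound} and a mechanism that tracks the actual correspondence of decompositions, which would continue to give information if one wanted ratios relative to a specific matrix rather than the global extremes. One small caveat you correctly flagged: Lemma \ref{bound} is stated only for fields of \emph{odd} order, so as written your proof covers the odd-order case, which is the literal reading of ``order $q\geq 9$'' one needs in this section anyway (the paper's own proof, like the proof of Lemma \ref{bound}, never actually uses oddness).
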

\begin{proof}
  Suppose $A+B=C$ for some exceptional units $A,B \in M_2(F)$. 
  Then $(\lambda A - \mu I)+ (\lambda B + \mu I) = \lambda C$ for any $\mu \in F$. Denote $\Spec(A)=\{a_1,a_2\}$ and $\Spec(B)=\{b_1,b_2\}$ (note that we may of course have $a_1=a_2$ or $b_1=b_2$). Now, $\Spec(\lambda A - \mu I)= \lambda \Spec(A)- \mu$, so $\lambda A - \mu I$ is an exceptional unit in $M_2(F)$ if and only if $\mu \notin \{\lambda a_1, \lambda a_2, \lambda a_1 - 1, \lambda a_2 -1\}$. Similarly, $\lambda B + \mu I$ is an exceptional unit in $M_2(F)$ if and only if $\mu \notin \{-\lambda b_1, -\lambda b_2, 1- \lambda b_1, 1 - \lambda b_2\}$. Since $q \geq 9$, there exist at least $q-8$ different elements $\mu \in F$ such that $\mu \notin \{\lambda a_1, \lambda a_2, \lambda a_1 - 1, \lambda a_2 -1\} \cup \{-\lambda b_1, -\lambda b_2, 1- \lambda b_1, 1 - \lambda b_2\}$. This gives us $q-8$ different decompositions of $\lambda C$ into a sum of two exceptional units.

  Now, let $C=A'+B'$ be another decomposition of $C$ into a sum of two exceptional units $A',B' \in M_2(F)$. Suppose that the above construction yields the decomposition of  $\lambda C = (\lambda A' - \mu' I) + (\lambda B' + \mu' I)$ for some $\mu' \in F$ and assume that this is the same decomposition that we obtained from the decomposition $C=A+B$, i.e., $\lambda A - \mu I = \lambda A' - \mu' I$ and $\lambda B + \mu I = \lambda B' + \mu' I$. This of course happens only if $A=A' + \tau I$ and $B=B' - \tau B$ for some $\tau \in F$.  

  Consequently, $\varphi_{2}(M_{2}(F), \lambda C) \geq \frac{q-8}{q}\varphi_{2}(M_{2}(F), C)$. Since $\lambda$ is invertible, we have $\varphi_{2}(M_{2}(F), C) \geq \frac{q-8}{q}\varphi_{2}(M_{2}(F), \lambda C)$, so 
 \[\frac{q-8}{q}\varphi_{2}(M_{2}(F), C) \leq \varphi_{2}(M_{2}(F), \lambda C) \leq \frac{q}{q-8}\varphi_{2}(M_{2}(F), C).\]
\end{proof}

We now have the following corollary.

\begin{corollary}
 \label{phi2ofrankone}
Let $ F $ be a finite field of an odd order $q \geq 9$ and let $C \in M_2(F)$ be a non-nilpotent matrix of rank one. Then
\begin{equation*}
\frac{q-8}{q}(q^4 - 4 q^3 + 5 q^2 - 4 q + 4) \leq \varphi_{2}(M_{2}(F), C) \leq \frac{q}{q-8}(q^4 - 4 q^3 + 5 q^2 - 4 q + 4).
\end{equation*}
\end{corollary}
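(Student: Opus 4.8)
The plan is to reduce an arbitrary non-nilpotent rank-one matrix $C$ to a scalar multiple of a rank-one idempotent, after which the bounds drop out of Lemmas \ref{phi2ofidempotent} and \ref{scalartimesidem} by direct substitution.

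First I would record the structural observation that every non-nilpotent $C \in M_2(F)$ of rank one can be written as $C = \lambda E$ with $\lambda \in F^*$ and $E$ a rank-one idempotent. Since $C$ has rank one we have $\det C = 0$, so the characteristic polynomial factors as $x\bigl(x - \operatorname{tr}(C)\bigr)$, and Cayley--Hamilton gives $C^2 = \operatorname{tr}(C)\,C$. Hence $C$ is nilpotent exactly when $\operatorname{tr}(C)=0$; as $C$ is non-nilpotent, $\lambda := \operatorname{tr}(C) \neq 0$, and $C$ has the two distinct eigenvalues $0$ and $\lambda$ in $F$. It is therefore diagonalizable over $F$, so $C = P\,\operatorname{diag}(\lambda,0)\,P^{-1} = \lambda\bigl(P\,\operatorname{diag}(1,0)\,P^{-1}\bigr)$ for some $P \in GL_2(F)$. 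Putting $E = P\,\operatorname{diag}(1,0)\,P^{-1}$, which is a rank-one idempotent, gives $C = \lambda E$ exactly.

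With this in hand the corollary is immediate. Applying Lemma \ref{scalartimesidem} to the matrix $E$ and the scalar $\lambda \in F^*$ yields
\[
\frac{q-8}{q}\,\varphi_{2}(M_2(F), E) \leq \varphi_{2}(M_2(F), \lambda E) \leq \frac{q}{q-8}\,\varphi_{2}(M_2(F), E),
\]
and since $C = \lambda E$ the middle term equals $\varphi_{2}(M_2(F), C)$. Finally, Lemma \ref{phi2ofidempotent} (stated for an arbitrary rank-one idempotent) evaluates $\varphi_{2}(M_2(F), E) = q^4 - 4q^3 + 5q^2 - 4q + 4$, and substituting this value into both ends of the chain produces exactly the claimed bounds. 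The argument has no real obstacle: the only point needing care is the normalization $C = \lambda E$, and in particular that $\lambda \neq 0$ — which is precisely where the non-nilpotency hypothesis is used and what separates this case from the nilpotent rank-one matrices. The hypothesis $q \geq 9$ is inherited directly from Lemma \ref{scalartimesidem}, so I would keep the write-up short and simply flag that dependence.
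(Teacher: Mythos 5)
Your proposal is correct and follows essentially the same route as the paper: write $C = \lambda E$ with $\lambda \in F^*$ and $E$ a rank-one idempotent (the paper phrases this as $C$ being similar to $\lambda\,\mathrm{diag}(1,0)$), then combine Lemma \ref{scalartimesidem} with the exact value from Lemma \ref{phi2ofidempotent}. Your Cayley--Hamilton justification that $\lambda = \operatorname{tr}(C) \neq 0$ and that $C$ is diagonalizable over $F$ merely fills in details the paper leaves implicit.
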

\begin{proof}
Since $C$ is a non-nilpotent matrix of rank one, it has two eigenvalues: $0$ and some $\lambda \in F^*$. Therefore, we can assume that $C$ is similar to matrix $\begin{pmatrix}
\lambda & 0\\
0 & 0\\
\end{pmatrix}=\lambda \begin{pmatrix}
1 & 0\\
0 & 0\\
\end{pmatrix}$. The result now follows from Lemmas \ref{phi2ofidempotent} and \ref{scalartimesidem}.
\end{proof}

\begin{lemma}
\label{phi2ofnilpotent}
Let $ F $ be a finite field of an odd order $q$ and let $C \in M_2(F)$ be a nilpotent matrix of rank one. Then
\begin{equation*}
\varphi_{2}(M_{2}(F), C)= q^4 - 4 q^3 + 3 q^2 + 2 q.
\end{equation*}
\end{lemma}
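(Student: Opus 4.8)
The plan is to mirror the proof of Lemma \ref{phi2ofidempotent}: first reduce to a canonical representative, then use Lemma \ref{sumstomatrices} to turn the problem into a count over the entries of a generic matrix, and finally organize that count so that the bookkeeping depends on as few parameters as possible.

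First I would note that every nilpotent rank-one matrix is conjugate to $N=\begin{pmatrix} 0 & 1 \\ 0 & 0 \end{pmatrix}$, and since conjugation by an invertible matrix preserves the property of being an exceptional unit, we have $\varphi_{2}(M_{2}(F),C)=\varphi_{2}(M_{2}(F),N)$. By Lemma \ref{sumstomatrices} I then need to count the matrices $A=\begin{pmatrix} a & b \\ c & d \end{pmatrix}$ for which $A$, $A-I$, $A-N$ and $A+I-N$ are all invertible. Writing out the four determinants, these read $ad-bc\neq 0$, $(a-1)(d-1)-bc\neq 0$, $ad-bc+c\neq 0$ and $(a+1)(d+1)-bc+c\neq 0$.

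The key observation that makes the count tractable is the following. Set $t=bc$ and $p=a+d$. Then the four forbidden values of $t$ are $ad$, $ad-p+1$, $ad+c$ and $ad+p+1+c$; that is, the set of forbidden $t$ is the translate by $ad$ of $\{0,\,1-p,\,c,\,p+1+c\}$, whose cardinality depends only on $p$ and $c$. I would split the count according to whether $c=0$ or $c\neq 0$. When $c=0$ the value $t=bc=0$ is forced and the conditions collapse to $a,d\notin\{0,1,-1\}$ with $b$ free, contributing $q(q-3)^2$ matrices. When $c\neq 0$ the map $b\mapsto bc$ is a bijection, so for fixed $a,d,c$ the number of admissible $b$ equals $q-N(p,c)$, where $N(p,c)=|\{0,\,1-p,\,c,\,p+1+c\}|$; summing over the $q$ pairs $(a,d)$ with a given trace $p$, then over $p\in F$ and over $c\in F^{*}$, reduces everything to evaluating $\sum_{c\neq 0}\sum_{p}N(p,c)$.

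The main work, and the only delicate point, is determining $N(p,c)$. Comparing the four offsets pairwise shows that coincidences occur precisely along the five lines $p\in\{1,\,-1,\,-1-c,\,1-c,\,-c/2\}$, while the pair $\{0,c\}$ never collides since $c\neq 0$; here I would use that $q$ is odd to rule out the spurious collisions $1=-1$ and $-1-c=1-c$. For $c\neq\pm 2$ these five values of $p$ are distinct and each produces a single merge, so $N(p,c)=3$ for those five and $N(p,c)=4$ otherwise, giving $\sum_p N(p,c)=4q-5$. For the two exceptional values $c=\pm 2$ three of the lines meet (for instance at $c=2$ the value $p=-1$ forces $v_2=v_3=v_4$), producing one value of $p$ with $N=2$, two with $N=3$, and hence $\sum_p N(p,c)=4q-4$. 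Carrying out the arithmetic, the $c\neq 0$ part contributes $q^4-5q^3+9q^2-7q$, which together with the $c=0$ contribution $q(q-3)^2=q^3-6q^2+9q$ yields the claimed total $q^4-4q^3+3q^2+2q$.
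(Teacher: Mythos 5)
Your proposal is correct and follows essentially the same route as the paper: reduce to the canonical nilpotent $\begin{pmatrix} 0 & 1 \\ 0 & 0\end{pmatrix}$, apply Lemma \ref{sumstomatrices}, treat $c=0$ separately, and for $c\neq 0$ count forbidden values of $b$ according to coincidences that occur exactly on the five trace lines $a+d\in\{1,-1,1-c,-1-c,-c/2\}$, with the exceptional merges at $c=\pm 2$. The paper enumerates these same lines as explicit cases rather than summing $N(p,c)$ over traces, but the decomposition, the coincidence analysis, and the final arithmetic are the same.
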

\begin{proof}
By choosing a suitable basis, we can assume that
$ C=\begin{pmatrix}
0 & 1\\
0 & 0\\
\end{pmatrix}$.
Let $ A=\begin{pmatrix}
a & b\\
c & d\\
\end{pmatrix}$. Then $C=A+(C-A)$ is a sum of two exceptional units if and only if the following conditions hold:
\begin{equation}
 \label{conditions1}
 \begin{split}
  bc \neq ad,  \, \,
  bc \neq (1-a)(1-d),  \\
  (b-1)c \neq ad \, \text { and } \,
  (b-1)c \neq (1+a)(1+d).
  \end{split}
\end{equation}

Assume first that $c=0$. Then $b \in F$ and $a, d \in F \setminus \{-1,0,1\}$ all satisfy conditions (\ref{conditions1}), so we have $q(q-3)^2$ choices for $A$ in this case. 

Assume now that $c \neq 0$. Then conditions (\ref{conditions1}) are equivalent to the fact that
\[b \notin \{adc^{-1}, (1-a)(1-d)c^{-1}, adc^{-1}+1, (1+a)(1+d)c^{-1}+1\}.\]
Now, consider the following cases:
\begin{enumerate}
    \item 
    $a=1-d$: then $b \notin X=\{(1-d)dc^{-1},  (1-d)dc^{-1}+1, (2-d)(1+d)c^{-1}+1\}$. If $c=2$ then $|X|=2$, so we have $q(q-2)$ choices for $a,b,c$ and $d$. If $c \neq 2$ then $|X|=3$ and we have
    $q(q-2)(q-3)$ choices for $A$.
    \item 
    $a=-1-d$: then $b \notin X=\{(-d)(1+d)c^{-1},  (2+d)(1-d)c^{-1}, (-d)(1+d)c^{-1}+1\}$. If $c=-2$ then $|X|=2$, so we have $q(q-2)$ choices for $a,b,c$ and $d$. If $c \neq -2$ then $|X|=3$ and we have
    $q(q-2)(q-3)$ choices for $A$.
    \item 
    $c \neq 2$ and $a=1-c-d$: then $b \notin X=\{(1-c-d)dc^{-1},(c+d)(1-d)c^{-1}, (2-c-d)(1+d)c^{-1}+1\}$. Since $c \neq 2$, we have $|X|=3$, so we have
    $q(q-2)(q-3)$ choices for $A$.
    \item 
    $c \neq -2$ and $a=-1-c-d$: then $b \notin X=\{(-1-c-d)dc^{-1},(2+c+d)(1-d)c^{-1}, (-1-c-d)dc^{-1}+1\}$. Since $c \neq -2$, we have $|X|=3$, so we have
    $q(q-2)(q-3)$ choices for $A$.
    \item 
    $c \neq -2,2$ and $2a+2d=-c$: then $b \notin X=\{(2^{-1}c-d)dc^{-1},(1-2^{-1}c+d)dc^{-1}(1-d)c^{-1}, 2^{-1}c-d)dc^{-1}+1\}$. We have $|X|=3$, so we have
    $q(q-3)^2$ choices for $A$. 
        
    \item 
    $a \notin \{1-d, -1-d, 1-c-d, -1-c-d, -2^{-1}c-d \}$: If $c=2$ or $c=-2$ then $|\{1-d, -1-d, 1-c-d, -1-c-d, 2^{-1}c-d \}|=3$, so there are $q-3$ choices for $a$. The fact that $c \neq -2a -2d$ implies that $b \notin X=\{adc^{-1}, (1-a)(1-d)c^{-1}, adc^{-1}+1, (1+a)(1+d)c^{-1}+1\}$ and $|X|=4$, so there are $q-4$ choices for $b$. This gives us $2q(q-3)(q-4)$ choices for $A$. 
    If $c \neq -2,2$ then $|\{1-d, -1-d, 1-c-d, -1-c-d, -2^{-1}c-d\}|=5$, so there are $q$ choices for $d$ and $q-5$ choices for $a$. We have $|X|=4$, so there are $q-4$ choices for $b$, and this gives us $q(q-3)(q-4)(q-5)$ choices for $A$.
\end{enumerate}
Again, all cases are mutually exclusive and together they cover every possibility. This gives us $q^4 - 4 q^3 + 3 q^2 + 2 q$ different sums of two exceptional units and the lemma is proven.
\end{proof}

Now, we gather the above results in the following theorem, which is the main result of this section.

\begin{theorem}
\label{main2}
    Let $p > 2$ be a prime and let $ R $ be a finite commutative local ring of order $p^{nr}$ such that $R/J(R)$ is a field with $p^r$ elements. 
    Let $c \in H(R)$ and denote $\overline{c}=c+J(H(R))$.  Then
    \begin{equation*}
    \varphi_{2}(H(R), c)= \begin{cases} p^{(4n-3)r}(p^{3r} - 3 p^{2r} + 6), \text { if } \overline{c}=0, \\
    p^{(4n-3)r}(p^{3r} - 2 p^{2r} - p^r + 3), \text { if } \overline{c}=1, \\
    p^{4(n-1)r}(p^{4r} -4 p^{3r} + 5 p^{2r} - 4 p^r + 4), \text { if } 0,1 \neq \overline{c} \text { is an idempotent}, \\
    p^{(4n-3)r}(p^{3r} - 4 p^{2r} + 3p^r + 2), \text { if } 0 \neq \overline{c} \text { is a nilpotent}.
    \end{cases}
    \end{equation*}
    If $c$ is invertible, we have $\varphi_{2}(H(R), c) \leq p^{(4n-3)r}(p^{3r} - 2 p^{2r} - p^r + 3)$. 
    Furthermore, if $p^r \geq 9$ then $\varphi_{2}(H(R), c) \geq p^{(4n-3)r}(p^{3r} - 8)$ and if additionally $\overline{c}^2=\lambda \overline{c}$ for some $0, 1 \neq \lambda \in F$, we have
    $\varphi_{2}(H(R), c) \leq \frac{p^{(4n-3)r}}{p^r-8}(p^{4r} - 4p^{3r} + 5p^{2r} - 4p^r + 4)$.
\end{theorem}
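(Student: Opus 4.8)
The plan is to reduce the computation over $H(R)$ to one over $M_2(F)$, where $F = R/J(R) = GF(p^r)$, and then to invoke the case-by-case counts established in the preceding lemmas. First I would note that $|R|$ is odd, so $2$ is a unit in $R$ and Theorem \ref{Th3} gives $J(H(R)) = H(J(R))$; consequently $H(R)/J(H(R)) \cong H(R/J(R)) = H(F)$, and since $\charac F = p$ is an odd prime, Theorem \ref{Th1} yields a ring isomorphism $\Psi \colon H(R)/J(H(R)) \to M_2(F)$. I would also record $|J(H(R))| = |H(J(R))| = |J(R)|^4 = p^{4(n-1)r}$, using $|J(R)| = |R|/|R/J(R)| = p^{(n-1)r}$.

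Next I would apply Theorem \ref{Th2} with $k = 2$ to obtain
\[ \varphi_2(H(R), c) = |J(H(R))|\,\varphi_2\!\big(H(R)/J(H(R)), \overline{c}\big) = p^{4(n-1)r}\,\varphi_2\big(M_2(F), \Psi(\overline{c})\big), \]
where the last equality uses that any ring isomorphism carries exceptional units to exceptional units, so that $\varphi_2$ is an isomorphism invariant. Everything thus reduces to evaluating $\varphi_2(M_2(F), \Psi(\overline{c}))$, and the point is that $\Psi$ preserves the ring-theoretic type of $\overline{c}$: it sends idempotents to idempotents, nilpotents to nilpotents, units to units, and preserves any relation $x^2 = \lambda x$.

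The core step is then a complete classification of the similarity type of $D := \Psi(\overline{c}) \in M_2(F)$, matching each type to the appropriate lemma. I would argue that a singular $D$ is exactly one of: $D = 0$ (Lemma \ref{phi2ofzero}); a rank-one nilpotent (Lemma \ref{phi2ofnilpotent}); a rank-one idempotent, i.e.\ eigenvalues $0,1$ (Lemma \ref{phi2ofidempotent}); or a rank-one matrix with eigenvalues $0,\lambda$ for some $\lambda \ne 0,1$, equivalently $D^2 = \lambda D$ with $D$ similar to $\operatorname{diag}(\lambda,0)$ (Corollary \ref{phi2ofrankone}), since a rank-one matrix has eigenvalues $0$ and $\operatorname{tr}(D)$ and the value of $\operatorname{tr}(D)$ splits these subcases. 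Every nonsingular $D$ --- whether scalar, diagonalizable, a single Jordan block, or with irreducible characteristic polynomial --- is covered by Lemma \ref{phi2ofinvertible}, giving $\varphi_2(M_2(F), D) \le \varphi_2(M_2(F), I) = q^4 - 2q^3 - q^2 + 3q$ with equality at $D = I$. Substituting $q = p^r$ into each lemma's count and multiplying by the prefactor $p^{4(n-1)r}$ then produces the stated formulas; the three singular counts $q^4-3q^3+6q$, $q^4-2q^3-q^2+3q$ and $q^4-4q^3+3q^2+2q$ are each divisible by $q$, which absorbs into the prefactor as $p^{4(n-1)r}\cdot p^r = p^{(4n-3)r}$, whereas the idempotent count $q^4-4q^3+5q^2-4q+4$ is not, leaving the prefactor $p^{4(n-1)r}$.

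Finally, for the regime $p^r \ge 9$ I would obtain the uniform lower bound $\varphi_2(M_2(F), D) \ge q^3(q-8)$ from Lemma \ref{bound} (valid for every $D$), which after multiplication by $p^{4(n-1)r}$ gives $\varphi_2(H(R), c) \ge p^{(4n-3)r}(p^{3r} - 8)$; and in the subcase $\overline{c}^2 = \lambda\overline{c}$ with $\lambda \ne 0,1$, the matrix $D$ is the rank-one non-nilpotent type, so Corollary \ref{phi2ofrankone} supplies the matching upper bound. The main obstacle is not any single estimate --- those are settled in the lemmas --- but organizing the similarity classification so that the stated hypotheses ($\overline{c}$ equal to $0$ or $1$, idempotent, nilpotent, invertible, or satisfying $\overline{c}^2=\lambda\overline{c}$) are verified to be mutually exclusive and exhaustively mapped onto the matrix cases handled by the lemmas; I would take particular care to justify that each property is genuinely transported across $\Psi$ and that the trichotomy of the trace on rank-one matrices is complete.
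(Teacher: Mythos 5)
Your proposal is correct and follows essentially the same route as the paper's own proof: reduction to $M_2(F)$ via Theorems \ref{Th3} and \ref{Th1} together with Corollary \ref{Th2} (with prefactor $|J(H(R))|=p^{4(n-1)r}$), followed by the same case-by-case application of Lemmas \ref{phi2ofzero}, \ref{phi2ofinvertible}, \ref{phi2ofidempotent}, \ref{phi2ofnilpotent}, \ref{bound} and Corollary \ref{phi2ofrankone}. One shared caveat: your final step asserts, exactly as the paper does, that $p^{4(n-1)r}\cdot q^{3}(q-8)$ equals $p^{(4n-3)r}(p^{3r}-8)$, whereas it actually equals $p^{(4n-1)r}(p^{r}-8)$, which is a weaker lower bound; this discrepancy originates in the paper's own statement and proof, not in your reconstruction of the argument.
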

\begin{proof}
    By Theorem \ref{Th3}, we have $H(R)/J(H(R)) \cong H(R)/H(J(R)) \cong H(R/J(R)) \cong H(F)$, where $ F=R/J(R)$ is a field with $|F|=p^r$. By Corollary \ref{Th2}, we know that we have $\varphi_{2}(H(R), c)=|J(H(R))|\varphi_{2}(H(R)/J(H(R)), \overline{c})=|J(R)|^{4}\varphi_{2}(M_2(F), \psi(\overline{c}))=p^{4(n-1)r}\varphi_{2}(M_2(F), \psi(\overline{c}))$, where $\psi$ denotes the isomorphism from $H(F)$ to $M_2(F)$, which is guaranteed by Theorem \ref{Th1}. 
    
    If $\overline{c}=0$, then $\psi(\overline{c})=0$, so Lemma \ref{phi2ofzero} now yields $\varphi_{2}(H(R), c)=p^{4(n-1)r}(p^{4r} - 3 p^{3r} + 6 p^r)$. On the other hand, if $c \in H(R)$ is invertible, then $\psi(\overline{c})$ is invertible as well and Lemma \ref{phi2ofinvertible} tells us that
    $\varphi_{2}(H(R), c) \leq p^{4(n-1)r}(p^{4r} - 2 p^{3r} - p^{2r} + 3 p^r)$ and $\varphi_{2}(H(R), c) = p^{4(n-1)r}(p^{4r} - 2 p^{3r} - p^{2r} + 3 p^r)$ if $\overline{c}=1$. Assume now that $0,1 \neq \overline{c}$ is an idempotent. By Lemma \ref{phi2ofidempotent}, we have $\varphi_{2}(H(R), c)=p^{4(n-1)r}(p^{4r} -4 p^{3r} + 5 p^{2r} - 4 p^r + 4)$. Finally, if $0 \neq \overline{c}$ is nilpotent, then Lemma \ref{phi2ofnilpotent} gives the desired result. 
    
    In case $p^r \geq 9$, the lower bound follows from Lemma \ref{bound} and the upper bound follows from Corollary \ref{phi2ofrankone}.
\end{proof}

\bigskip

\end{document}